\documentclass[a4paper,english]{article}
\usepackage{babel}
\usepackage{zibtitlepage}

\usepackage[utf8]{inputenc}

\usepackage{geometry}

\usepackage{amsmath}
\usepackage{amsfonts}
\usepackage{amssymb}
\usepackage{mathtools}

\usepackage[ruled,linesnumbered,vlined]{algorithm2e}

\newcommand{\goto}{\textbf{go to}}
\SetCommentSty{mycommfont}
\SetKwInOut{Input}{in}
\SetKwInOut{Output}{out}
\SetKwRepeat{Do}{do}{while}

\usepackage{tabularx,supertabular,booktabs,array}
\usepackage{pdflscape}
\usepackage{longtable}

\usepackage{tikz}
\usepackage{etex}
\usepackage{pgfplots}
\usetikzlibrary{shapes.gates.logic.US,shapes.geometric}
\usetikzlibrary{shapes,arrows,backgrounds,topaths}
\usetikzlibrary{arrows,decorations.markings}
\usetikzlibrary{matrix,snakes}
\usetikzlibrary{plotmarks}
\usetikzlibrary{decorations.text}
\usetikzlibrary{patterns}

\usepackage{pgfplots}

\usepackage{url}
\usepackage[bookmarks,implicit=true,colorlinks,linkcolor=black,citecolor=black,urlcolor=blue]{hyperref}

\usepackage{xcolor}
\usepackage{xspace}
\usepackage{subcaption}
\usepackage{xargs}
\usepackage[colorinlistoftodos,prependcaption,textsize=footnotesize]{todonotes}
\usepackage{gensymb}
 \newcommand{\name}[1]{\mbox{#1}\xspace}

\newcommand{\scip}{\name{SCIP}}
\newcommand{\baron}{\name{BARON}}

\newcommand{\cplex}{\name{CPLEX}}
\newcommand{\cppad}{\name{CppAD}}
\newcommand{\ipopt}{\name{Ipopt}}
\newcommand{\mumps}{\name{MUMPS}}
\newcommand{\randset}{\name{\textsc{rand}}}
\newcommand{\realset}{\name{\textsc{real}}}

\newcommand{\MINLP}{\name{MINLP}}
\newcommand{\MINLPs}{\name{MINLPs}}
\newcommand{\MILP}{\name{MILP}}
\newcommand{\MILPs}{\name{MILPs}}
\newcommand{\NLP}{\name{NLP}}
\newcommand{\NLPs}{\name{NLPs}}

\newcommand{\IP}{\name{IP}}

\newcommand{\LP}{\name{LP}}
\newcommand{\LPs}{\name{LPs}}

\newcommand{\rcpp}{\name{RCPP}}

\newcommand{\cpp}{\name{CPP}}
\newcommand{\cpps}{\name{CPPs}}
\newcommand{\grasp}{\name{GRASP}}
\newcommand{\ringset}{\mathcal{R}}
\newcommand{\nrings}{n}
\newcommand{\ntypes}{T}
\newcommand{\typeset}{\mathcal{T}}
\newcommand{\type}{\tau}

\newcommand{\F}{\mathcal{F}}
\newcommand{\Fprime}{\mathcal{F'}}
\newcommand{\DW}{DW}
\newcommand{\PDW}{PDW}

\newcommand{\redcosts}{red}

\newcommand{\cp}{\mathcal{CP}}
\newcommand{\cpdom}{\mathcal{CP}^{*}}
\newcommand{\cpfeas}{\mathcal{CP}_{feas}}
\newcommand{\cpinfeas}{\mathcal{CP}_{infeas}}
\newcommand{\cpunknown}{\mathcal{CP}_{unknown}}

\newcommand{\rp}{\mathcal{RP}}

\newcommand{\rext}{R}
\newcommand{\rint}{r}
\newcommand{\demand}{D}
\newcommand{\width}{W}
\newcommand{\height}{H}
\newcommand{\circleset}{\mathcal{C}}
\newcommand{\mrp}{\mathcal{MP}}

\newcommand{\mvec}[2]{\begin{pmatrix}#1\\#2\end{pmatrix}}

\newcommand{\norm}[1]{\left\|#1\right\|}

\newcommand{\floor}[1]{\left\lfloor{#1}\right\rfloor}
\newcommand{\ceil}[1]{\left\lceil{#1}\right\rceil}
\newcommand{\R}{\mathbb{R}}
\newcommand{\Z}{\mathbb{Z}}

\newcommand{\dash}{---}
\newcommand{\fa}{\text{ for all }}

\definecolor{c1}{HTML}{01DF01}
\definecolor{c2}{HTML}{F7FE2E}
\definecolor{c3}{HTML}{DBA901}
\definecolor{c4}{HTML}{B45F04}
\definecolor{c5}{HTML}{DF3A01}

\newcommand{\bardiagram}[2]{
  \def\yscale{2.0 / 30.0}
  \begin{tikzpicture}[scale=1]

    \draw[->] (0,0) -- (0,55 * \yscale) node[above]{\# instances};
    \draw[->] (0,0) -- (6.0,0) node[right]{#1};

    \draw[] (0.0,0 * \yscale) -- (-0.2,0 * \yscale) node[left]{0\%};
    \draw[] (0.0,25 * \yscale) -- (-0.2,25 * \yscale) node[left]{25\%};
    \draw[] (0.0,50 * \yscale) -- (-0.2,50 * \yscale) node[left]{50\%};

    \foreach \idx\desc\a\b\c\d\e in {#2}
    {
      \def\startx{0.2 + \idx * 1.5};
      \def\rectwidth{0.2};
      \draw (\startx + 0.5, -0.1) -- (\startx + 0.5,-0.1) node[below]{$\desc$};

      \draw[fill=c1] (\startx,       0) rectangle (\startx       + \rectwidth,\a * \yscale);
      \draw[fill=c2] (\startx + 0.2, 0) rectangle (\startx + 0.2 + \rectwidth,\b * \yscale);
      \draw[fill=c3] (\startx + 0.4, 0) rectangle (\startx + 0.4 + \rectwidth,\c * \yscale);
      \draw[fill=c4] (\startx + 0.6, 0) rectangle (\startx + 0.6 + \rectwidth,\d * \yscale);
      \draw[fill=c5] (\startx + 0.8, 0) rectangle (\startx + 0.8 + \rectwidth,\e * \yscale);
    }

  \end{tikzpicture}
}

\newcommand{\tabledefline}[2]{\multicolumn{1}{l}{\rlap{#1\ \dash\ #2}}\\}

\definecolor{plum}{rgb}{0.8, 0.6, 0.8}
\definecolor{applegreen}{rgb}{0.55, 0.71, 0.0}
\definecolor{apricot}{rgb}{0.98, 0.81, 0.69}
\definecolor{amber}{rgb}{1.0, 0.49, 0.0}
\definecolor{americanrose}{rgb}{1.0, 0.01, 0.24}

\newcommandx{\change}[2][1=]{\todo[linecolor=americanrose,backgroundcolor=americanrose!25,bordercolor=americanrose,#1]{#2}\,}
\newcommandx{\improvement}[2][1=]{\todo[linecolor=amber,backgroundcolor=amber!25,bordercolor=amber,#1]{#2}\,}
\newcommandx{\unsure}[2][1=]{\todo[linecolor=apricot,backgroundcolor=apricot!25,bordercolor=apricot,#1]{#2}\,}
\newcommandx{\info}[2][1=]{\todo[linecolor=applegreen,backgroundcolor=applegreen!25,bordercolor=applegreen,#1]{#2}\,}
\newcommandx{\missing}[2][1=]{\todo[linecolor=blue,backgroundcolor=blue!25,bordercolor=blue,#1]{#2}\,}

\tikzstyle{chart}=[
    legend label/.style={font={\scriptsize},anchor=west,align=left},
    legend box/.style={rectangle, draw, minimum size=5pt},
    axis/.style={black,semithick,->},
    axis label/.style={anchor=east,font={\tiny}},
]

\tikzstyle{pie chart}=[
    chart,
    slice/.style={line cap=round, line join=round, draw=black},
    pie title/.style={font={\bf}},
    slice type/.style 2 args={
        ##1/.style={fill=##2},
        values of ##1/.style={}
    }
]

\pgfdeclarelayer{background}
\pgfdeclarelayer{foreground}
\pgfsetlayers{background,main,foreground}

\newcommand{\pie}[3][]{
    \begin{scope}[#1]
    \pgfmathsetmacro{\curA}{90}
    \pgfmathsetmacro{\r}{1}
    \def\c{(0,0)}
    \node[pie title] at (90:1.3) {#2};
    \foreach \v/\s in{#3}{
        \pgfmathsetmacro{\deltaA}{\v/100*360}
        \pgfmathsetmacro{\nextA}{\curA + \deltaA}
        \pgfmathsetmacro{\midA}{(\curA+\nextA)/2}

        \path[slice,\s] \c
            -- +(\curA:\r)
            arc (\curA:\nextA:\r)
            -- cycle;
        \pgfmathsetmacro{\d}{max((\deltaA * -(.5/50) + 1) , .5)}

        \begin{pgfonlayer}{foreground}
        \path \c -- node[pos=\d,pie values,values of \s]{$\v\%$} +(\midA:\r);
        \end{pgfonlayer}

        \global\let\curA\nextA
    }
    \end{scope}
}

\newcommand{\revision}[1]{#1}

\newcommand{\volcol}{vol}
\newcommand{\dualcol}{dual}
\newcommand{\primalcol}{primal}
\newcommand{\graspcol}{\grasp}
\newcommand{\cpfoundcol}{nfeas}
\newcommand{\cpcandcol}{ncands}
\newcommand{\rpcol}{nrp}
\newcommand{\nodescol}{nodes}
\newcommand{\enumtimecol}{enum time}

\usepackage{amsthm}
\newtheorem{definition}{Definition}
\newtheorem{theorem}{Theorem}
\newtheorem{example}{Example}
\newtheorem{remark}{Remark}

\begin{document}

\let\pdfoutorg\pdfoutput
\let\pdfoutput\undefined
\ZTPAuthor{Ambros Gleixner\\
           Stephen J.\ Maher\\
           Benjamin M\"uller\\
           Jo\~{a}o Pedro Pedroso
}
\ZTPTitle{\bf Exact Methods for Recursive Circle Packing}
\ZTPInfo{This work has been supported by the Research Campus MODAL
  \emph{Mathematical Optimization and Data Analysis Laboratories} funded by the
  Federal Ministry of Education and Research (BMBF Grant~05M14ZAM).  All
  responsibilty for the content of this publication is assumed by the authors.}
\ZTPNumber{17-07}
\ZTPMonth{January}
\ZTPYear{2018}
\ZTPInfo{This report has been published in Annals of Operations Research. Please cite as:\\[1ex] Gleixner et al., Price-and-verify: a new algorithm for recursive circle packing using Dantzig–Wolfe decomposition. Annals of Operations Research, 2018, \href{http://dx.doi.org/10.1007/s10479-018-3115-5}{\textcolor{blue}{DOI:10.1007/s10479-018-3115-5}}}

\title{\bf Exact Methods for Recursive Circle Packing}

\author{Ambros Gleixner\thanks{Zuse Institute Berlin, Takustr.~7, 14195~Berlin, Germany, \texttt{\{benjamin.mueller,gleixner,maher\}@zib.de}}
   \and Stephen J.\ Maher$^*$
   \and Benjamin M\"uller$^*$
   \and Jo\~{a}o Pedro Pedroso\thanks{Faculdade de Ciências, Universidade do Porto, Rua do Campo Alegre, 4169-007 Porto, Portugal, \texttt{jpp@fc.up.pt}}}

\zibtitlepage
\maketitle

\begin{abstract}
  Packing rings into a minimum number of rectangles is an optimization problem
which appears naturally in the logistics operations of the tube industry. It
encompasses two major difficulties, namely the positioning of rings in
rectangles and the recursive packing of rings into other rings. This problem is
known as the Recursive Circle Packing Problem (\rcpp).
We present the first \revision{dedicated method for solving \rcpp that provides strong dual bounds} based on an exact Dantzig-Wolfe
reformulation of a nonconvex mixed-integer nonlinear programming
formulation. The key idea of this reformulation is to break symmetry on each
recursion level by enumerating one-level packings, i.e., packings of circles
into other circles, and by dynamically generating packings of circles into
rectangles.
We use column generation techniques to design a ``price-and-verify'' algorithm
that solves this reformulation to global optimality.
Extensive computational experiments on a large test set show that
our method not only computes tight dual bounds, but often produces primal
solutions better than those computed by heuristics from the literature.
 \end{abstract}

\section{Introduction}\label{section:introduction}

Packing problems appear naturally in a wide range of real-world
applications. They contain two major difficulties. First, complex geometric
objects need to be selected, grouped, and packed into other objects, e.g.,
warehouses, containers, or parcels. Second, these objects need to be packed in a
dense, non-overlapping way and, depending on the application, respect some
physical constraints. Modeling packing problems mathematically often leads to
nonconvex mixed-integer nonlinear programs (\MINLPs). Solving them to global
optimality is a challenging task for state-of-the-art \MINLP solvers.
In this paper, we will study exact algorithms for solving a particularly complex
version involving the \emph{recursive} packing of 2-dimensional rings.
This variant has real-world applications in the tube industry.

The Recursive Circle Packing Problem (\rcpp) has been introduced recently by
Pedroso et al.~\cite{Pedroso2016}. The objective of \rcpp is to select a minimum
number of rectangles of the same size such that a given set of rings can be
packed into these rectangles in a non-overlapping way. A ring is characterized
by an internal and an external radius. Rings can be put recursively into larger
ones or directly into a rectangle. A set of rings packed into a rectangle is
called a \textit{feasible packing} if and only if all rings lie within the
boundary of the rectangle and do not intersect each
other. Figure~\ref{fig:prob:solu} gives two examples of a feasible packing.
\begin{figure}[h]
  \clearpage{}\centering
\begin{minipage}{0.30\textwidth}
  \centering
  \begin{tikzpicture}[scale=0.4]
    \draw[thick] (0,0) -- (8.0,0) -- (8.0,7.0) -- (0, 7.0) -- cycle;
    \foreach \id/\x/\y/\R/\r in
             {
               1/1.9/1.9/1.9/1.8,
               2/0.96/1.8/0.83/0.4,
               2/2.25/2.8/0.83/0.4,
               2/2.5/1.15/0.83/0.4,
               1/6.1/1.9/1.9/1.8,
               2/5.16/1.8/0.83/0.4,
               2/6.45/2.8/0.83/0.4,
               2/6.7/1.15/0.83/0.4,
               1/4.0/5.1/1.9/1.8,
               2/3.06/5.0/0.83/0.4,
               2/4.35/6.0/0.83/0.4,
               2/4.6/4.35/0.83/0.4,
               2/1.4/6.0/0.83/0.4,
               2/0.83/4.45/0.83/0.4,
               2/6.6/6.0/0.83/0.4,
               2/7.17/4.45/0.83/0.4
             }
             \draw[fill=black,even odd rule] (\x,\y) circle (\R) circle (\r);
  \end{tikzpicture}
\end{minipage}
\begin{minipage}{0.30\textwidth}
  \centering
  \begin{tikzpicture}[scale=0.4]
    \draw[thick] (0,0) -- (8.0,0) -- (8.0,7.0) -- (0, 7.0) -- cycle;
    \foreach \id/\x/\y/\R/\r in
             {
               2/6.8/5.8/1.2/1.0,
               3/2.00128/5.0/2.0/0.4,
               8/4.721279/6.098191/0.9/0.6,
               9/0.9/2.317016/0.9/0.6,
               12/5.551494/2.4/2.4/2.2,
               13/2.14/0.9/0.9/0.6,
               12/4.6/2.8/1.2/1.0,
               13/4.6/2.8/0.9/0.6,
               13/6.0/1.2/0.9/0.6,
               13/6.7/3.0/0.9/0.6
             }
             \draw[fill=black,even odd rule] (\x,\y) circle (\R) circle (\r);
  \end{tikzpicture}
\end{minipage}
\clearpage{}
  \caption{Two feasible packings of rings into rectangles.}
  \label{fig:prob:solu}
\end{figure}
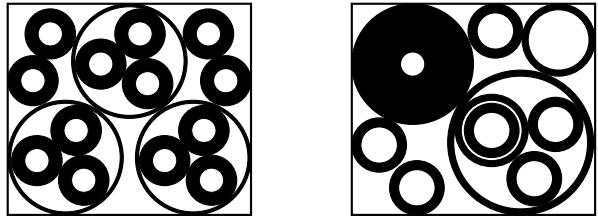

Pedroso et al.~\cite{Pedroso2016} present a nonconvex \MINLP formulation for
\rcpp. The key idea of the developed \MINLP is to use binary variables in order
to indicate whether a ring is packed inside another larger ring or directly into
a rectangle. Due to a large number of binary variables and nonconvex quadratic
constraints, the model is not of practical relevance and can only be used to
solve very small instances. However, the authors present a well-performing local
search heuristic to find good feasible solutions.

The purpose of this article is to present the first exact column generation
algorithm for \rcpp based on a Dantzig-Wolfe decomposition, for which, so far,
only heuristics exist. The new method is able to solve small- and medium-size
instances to global optimality and computes good feasible solutions
  and tight dual bounds for larger ones.
We first develop a reformulation, which is similar to the classical
reformulation for the Cutting Stock Problem~\cite{Gilmore1961}, however,
featuring nonlinear and nonconvex sub-problems. This formulation breaks the
symmetry between equivalent rectangles.
As a second step, we combine this reformulation with an enumeration scheme
for patterns that are characterized by rings packed inside other rings. Such
patterns only allow for a one-level recursion and break the symmetry between
rings with the same internal and external radius in each rectangle.
Finally, we present how to solve the resulting reformulation with a
column generation algorithm.

Finally, we present how to solve the resulting reformulation with a specialized
column generation algorithm that we call \emph{price-and-verify}. The
algorithm does not only generate new patterns but also verifies
patterns that are found during the enumeration scheme dynamically, i.e., only when required for
solving the continuous relaxation of the reformulation.

\section{Background}

The problem of finding dense packings of geometric objects has a rich history
that goes back to Kepler's Conjecture in 1611, which has been proven recently by
Hales et al.~\cite{Hales2015}. Packing identical or diverse objects into
different geometries like circles, rectangles, and polygons remains a relevant
topic and has been the focus of much research during the last decades.
The survey by Hifi and M'Hallah~\cite{Hifi2009} reviews the most relevant
results for packing 2- and 3-dimensional spheres into regions in the Euclidean
space. Applications, heuristics, and exact strategies of packing arbitrarily
sized circles into a container are presented by Castillo et
al.~\cite{Castillo2008}.
Costa et al.~\cite{Costa2013} use a spatial branch-and-bound algorithm to find
good feasible solutions for the case of packing identical circles. They propose
different symmetry-breaking constraints to tighten the convex relaxation and
improve the success rate of local nonlinear programming algorithms.

Closely related to packing problems are plate-cutting problems, in which convex
objects, e.g., circles, rectangles, or polygons, are cut from different convex
geometries~\cite{Dror1999}. Modeling these problems leads typically to nonlinear
programs, for which exact mathematical programming solutions are described by
Kallrath~\cite{Kallrath2009}.
Minimizing the volume of a box for an overlap-free placement of ellipsoids was
studied by Kallrath~\cite{Kallrath2015}. His closed, non-convex nonlinear
programming formulation uses the entries of the rotation matrix as variables and
can be used to get feasible solutions for instances with up to 100 ellipsoids.

The general type of the problem finds application in the tube industry, where
shipping costs represent a large fraction of the total costs of product
delivery. Tubes will be cut to the same length of the container in which they
may be shipped. In order to reduce idle space inside containers, smaller tubes
might be placed inside larger ones. Packing tubes as densely as possible reduces
the total number of containers needed to deliver all tubes and thus has a large
impact on the total cost.

\rcpp is a generalization of the well-known,
$\mathcal{NP}$-hard~\cite{Lenstra1979,Demaine2010} Circle Packing Problem (\cpp) and
therefore is also $\mathcal{NP}$-hard. Reducing an instance of \cpp to \rcpp can
be done by setting all internal radii to zero, i.e., by forbidding to pack rings
into larger rings. Typically, problems like \rcpp contain multiple sources of
symmetry. Any permutation of rectangles, i.e., relabeling rectangles,
constitutes an equivalent solution to \rcpp. Even worse, there is also
considerable symmetry inside a rectangle. First, rotating or reflecting a
rectangle gives an equivalent rectangle since both contain the same set of
rings. Second, two rings with same internal and external radius can be exchanged
arbitrarily inside a rectangle, again resulting in an equivalent rectangle
packing.

One possible way to break the symmetry of \rcpp is to add symmetry-breaking
constraints, which have been frequently used for scheduling problems, e.g.,
lot-sizing problems~\cite{Jans2009}.
An alternative approach is the use of decomposition techniques. These techniques
aggregate identical sub-problems in order to reduce symmetry and typically
strengthen the relaxation of the initial formulation. A well-known decomposition
technique is the Dantzig-Wolfe decomposition~\cite{Dantzig1960}. It is an
algorithm to decompose Linear Programs (\LPs) into a master and, in general,
several sub-problems. Column generation is used with this decomposition to
improve the solvability of large-scale linear programs. Embedded in a
branch-and-bound algorithm, it can be used to solve mixed-integer linear
programs (\MILPs), e.g., bin packing~\cite{Vanderbeck2010}, two dimensional bin
packing~\cite{Pisinger2005}, cutting stock~\cite{Gilmore1961,Gilmore1965},
multi-stage cutting stock~\cite{Muter2013}, and many other problems.
Fortz et al.~\cite{Fortz2010} applied a Dantzig-Wolfe decomposition to a
stochastic network design problem with a convex nonlinear objective function.
While most implementations of Dantzig-Wolfe are problem-specific, a number of
frameworks have been implemented for automatically applying a Dantzig-Wolfe
decomposition to a compact \MILP formulation,
see~\cite{Bergner2015,Gamrath2010,Ralphs2005}.

In this paper we apply a Dantzig-Wolfe decomposition to an \MINLP formulation of
\rcpp and present the first exact solution method that is able to solve
practically relevant instances.
The rest of the paper is organized as follows. In Section~\ref{section:problem},
we introduce basic notation and discuss the limitations of a compact \MINLP
formulation for \rcpp. Section~\ref{section:cg} presents a first Dantzig-Wolfe
decomposition of the \MINLP formulation. After introducing the concept of
circular and rectangular patterns, we extend the formulation from
Section~\ref{section:cg} in Section~\ref{section:pdw} to our final formulation
for \rcpp. Afterwards, we present a column generation algorithm to solve this
formulation, which uses an enumeration scheme introduced in
Section~\ref{section:enumeration}. Section~\ref{section:validbounds} shows how
to prove valid dual and primal bounds, even when difficult sub-problems cannot
be solved to optimality. Finally, in Section~\ref{section:experiments}, we
analyze the performance of our method on a large test set containing $800$
synthetic instances and nine real-world instances from the tube
industry. Section~\ref{section:conclusion} gives concluding remarks.

\section{Problem Statement}
\label{section:problem}

Consider a set $\typeset := \left\{1, \ldots, \ntypes\right\}$ for $\ntypes$
different types of rings and an infinite number of rectangles. In what follows,
we consider each rectangle to be of size $\width \in \R_+$ times
$\height \in \R_+$ and to be placed in the Euclidean plane such that the corners
are $(0,0)$, $(0,\width)$, $(\height,0)$, and $(\width, \height)$.

For each ring type $t \in \typeset$ we are given an internal radius
$\rint_t \in \R_+$ and an external radius $\rext_t \in \R_+$ such that
\begin{equation*}
  \rint_t \le \rext_t \le \min\{\width,\height\}.
\end{equation*}
Also, each ring type $t \in \typeset$ has a demand $\demand_t \in \Z_{+}$.
We assume without loss of generality that $\typeset$ is sorted such that
$\rext_1 \le \ldots \le \rext_\ntypes$. To simplify the notation, we denote by
$\nrings := \sum_{t \in \typeset} \demand_t$ the total number of individual
rings and denote by $\ringset := \{1, \ldots, \nrings\}$ the corresponding index
set. The function $\type : \ringset \rightarrow \typeset$ maps each individual
ring to its corresponding type.
By a slight abuse of notation, we identify with $\rint_i$ and $\rext_i$ the
internal and external radius of ring $i \in \ringset$, i.e.,
$\rext_i = \rext_{\type(i)}$ and $\rint_i = \rint_{\type(i)}$.

The task in \rcpp is to pack all rings in $\ringset$ into the smallest number of
rectangles. Rings must lie within the boundary of a rectangle and must not
intersect each other.
More precisely, a feasible solution to \rcpp can be encoded as a 3-tuple
$(c,x,y) \in \{1,\ldots,k\}^n \times \R^n \times \R^n$ where $(x_i,y_i)$ denotes
the center of ring $i \in \ringset$ inside rectangle $c_i \in \{1,\ldots,k\}$,
and an upper bound on the number of rectangles needed is given by $k \le n$. The
number of used rectangles is equal to the cardinality of
$\{c_1,\ldots,c_\nrings\}$, i.e., the number of distinct integer values. Rings
must not intersect the boundary of the rectangle, i.e.,
\begin{align}
  \rext_i \le x_i \le \width - \rext_i  & \fa i \in \ringset, \label{eq:minlp:boundsx}\\
  \rext_i \le y_i \le \height - \rext_i & \fa i \in \ringset. \label{eq:minlp:boundsy}
\end{align}
For a given 3-tuple $(c,x,y)$ we denote by
\begin{equation*}
  A(i) := \left\{(\tilde x, \tilde y) \in \R^2 \mid \rint_i < \norm{\mvec{\tilde x-x_i}{\tilde y-y_i}}_2 < \rext_i \right\}
\end{equation*}
the area occupied by ring~$i$ in rectangle~$c_i$. The non-overlapping condition
between different rings is equivalent to
\begin{equation} \label{eq:cip:nonoverlap}
  A(i) \cap A(j) = \emptyset
\end{equation}
for all $i \neq j$ with $c_i = c_j$.

\rcpp can be equivalently formulated as an \MINLP, see Pedroso et
al.~\cite{Pedroso2016}. The formulation consists of four different types of
variables:
\begin{itemize}
\item $(x_i,y_i) \in \R^2$, the center of ring $i \in \ringset$,
\item $z_c \in \{0,1\}$, a decision variable whether rectangle $c \in \{1,\ldots,k\}$ is used,
\item $w_{i,c} \in \{0,1\}$, a decision variable whether ring $i$ is directly placed in rectangle $c$, and
\item $u_{i,j} \in \{0,1\}$, a decision variable whether ring $i$ is directly placed in ring $j$.
\end{itemize}
We say that a ring $i \in \ringset$ is \emph{directly placed} in another ring
$j$ or rectangle $c$ if it is not contained in another, larger ring inside $j$
or $c$, respectively. Condition~\eqref{eq:cip:nonoverlap} can be modeled by the
constraints
\begin{align}
  && \norm{\mvec{x_i}{y_i} - \mvec{x_j}{y_j}}_2^2 &\ge (\rext_i + \rext_j)^2(w_{i,c} + w_{j,c} -1) && \fa i, j \in \ringset : i \neq j, \; c \in \{1,\ldots,k\}, \label{eq:minlp:nonoverlap1}\\
  && \norm{\mvec{x_i}{y_i} - \mvec{x_j}{y_j}}_2^2 &\ge (\rext_i + \rext_j)^2(u_{i,h} + u_{j,h} -1) && \fa i,j,h \in \ringset: i \neq j \wedge i \neq h \wedge j \neq h, \label{eq:minlp:nonoverlap2} \\
  && \norm{\mvec{x_i}{y_i} - \mvec{x_j}{y_j}}_2 &\le \rint_j - \rext_i + M_{i,j} (1-u_{i,j}) && \fa i, j \in \ringset : \rext_i \le \rint_j, \label{eq:minlp:nonoverlap3}
\end{align}
which are nonconvex nonlinear inequality
constraints. Inequality~\eqref{eq:minlp:nonoverlap1} guarantees that no two
rings~$i,j \in \ringset$ overlap when they are directly placed inside the same
rectangle~$c$, i.e., if~$w_{i,c} = w_{j,c} = 1$. Similarly,
inequality~\eqref{eq:minlp:nonoverlap2} ensures that no two rings intersect
inside another ring.
Constraint~\eqref{eq:minlp:nonoverlap3} ensures that if $u_{i,j} = 1$, then ring
$i$ is directly placed inside $j$ and does not intersect its
boundary. Otherwise, the constraint is disabled. An appropriate value for
$M_{i,j}$ to guarantee that the conditions of~\eqref{eq:minlp:nonoverlap3} are
satisfied is
\begin{equation*}
  M_{i,j} := \sqrt{(\width - \rext_i - \rext_j)^2 + (\height - \rext_i - \rext_j)^2},
\end{equation*}
which is the maximum distance between two rings $i,j \in \ringset$ in a $\width$
times $\height$ rectangle that do not overlap.

The full \MINLP model for \rcpp reads as follows:
\begin{subequations} \label{eq:minlp}
\begin{align}
  && \min \; & \sum_{c = 1}^k z_c \label{eq:minlp:obj}\\
  && \text{s.t.} \quad & w_{i,c} \le z_c && \fa i \in \ringset, \; c \in \{1,\ldots,k\} \label{eq:minlp:cons} \\
  &&                   & \sum_{c =1}^k w_{i,c} + \sum_{j \in \ringset} u_{i,j} = 1 && \fa i \in \ringset \label{eq:minlp:demand} \\
  &&                   & \eqref{eq:minlp:boundsx}, \eqref{eq:minlp:boundsy}, \eqref{eq:minlp:nonoverlap1}, \eqref{eq:minlp:nonoverlap2}, \eqref{eq:minlp:nonoverlap3} \nonumber \\
  &&                   & (x_i,y_i) \in \R^2 && \fa i \in \ringset \nonumber \\
  &&                   & z_c \in \{0,1\} && \fa c \in \{1,\ldots,k\} \nonumber \\
  &&                   & w_{i,c} \in \{0,1\} && \fa i \in \ringset, \; c \in \{1,\ldots,k\} \nonumber \\
  &&                   & u_{i,j} \in \{0,1\} && \fa i,j \in \ringset: i \neq j \nonumber
\end{align}
\end{subequations}
Finally, the objective function~\eqref{eq:minlp:obj} minimizes the total number
of rectangles used. Constraint~\eqref{eq:minlp:cons} guarantees that we can pack
a ring inside a rectangle if and only if the rectangle is used. Each ring needs
to be packed into another ring or a rectangle, which is ensured
by~\eqref{eq:minlp:demand}.

\begin{remark}\label{remark:rcppmax}\normalfont
  Formulation~\eqref{eq:minlp} can be adapted for maximizing the load of a
  single rectangle. Roughly speaking, we need to fix $z_c = 0$ for each $c > 1$
  and replace each variable $w_{i,c}$ by $w_{i}$, which then turns into an
  indicator whether ring $i \in \ringset$ has been packed or not. The objective
  function~\eqref{eq:minlp:obj} changes to
\begin{align*}
  \max \; \sum_{i \in \ringset} \alpha_i w_i,
\end{align*}
where $\alpha_i \in \R_+$ is a non-negative weight for each ring $i\in\ringset$.
\end{remark}

Unfortunately, general \MINLP solvers require a considerable amount of time to
solve Formulation~\eqref{eq:minlp} because it contains a large number of
variables, many nonconvex constraints, and much symmetry.
Even the smallest instance of our test set, containing $54$ individual rings,
could not be solved within several hours by \scip~3.2.1~\cite{SCIP} nor by
\baron~16.8.24~\cite{Kilinc2014,Sahinidis1996}. A common method used to improve
the solvability of geometric packing problems is to add symmetry breaking
constraints. However, strengthening~\eqref{eq:minlp} by adding the constraints
\begin{align}
  && z_c    &\ge z_{c+1} && \fa c \in \{1,\ldots,k-1\}, \label{eq:symmetry:cons1} \\
  && u_{i,j} &\ge u_{h,j} && \fa j \in \ringset, \; i < h: \type(i) = \type(h), \text{ and} \label{eq:symmetry:cons2} \\
  && w_{i,c} &\ge w_{j,c} && \fa c \in \{1,\ldots,k\}, \; i < j: \type(i) = \type(j), \label{eq:symmetry:cons3}
\end{align}
which break symmetry by ordering rectangles and rings of the same type, had no
impact on the solvability of \rcpp.

The reason for the poor performance is that~\eqref{eq:minlp} contains
$O(kn^2 + n^3)$ many difficult nonconvex constraints of the
form~\eqref{eq:minlp:nonoverlap1} and~\eqref{eq:minlp:nonoverlap2}. Even worse,
the binary variables in those constraints appear with a big-M, which is known to
result in weak linear programming relaxations.
Another difficulty is the symmetry in the model that is not eliminated
by~\eqref{eq:symmetry:cons1},~\eqref{eq:symmetry:cons2},
or~\eqref{eq:symmetry:cons3}. Each reflection or rotation of a feasible packing
of a rectangle and rings inside a ring yields another, equivalent, solution.

In the following, we present two formulations based on a Dantzig-Wolfe
decomposition to break the remaining symmetry in~\eqref{eq:minlp}. The first
formulation breaks the symmetry between rectangles. The second is an extension
of the first one and additionally breaks symmetry between rings of the same type
inside the rectangles and other rings.

To overcome the difficulty of the exponential number of variables in the
Dantzig-Wolfe decompositions, we use column generation to solve the continuous
relaxations of both formulations. The pricing problem of the first formulation
is the maximization version of~\eqref{eq:minlp}. The pricing problem of the
second formulation is a maximization version of the \cpp.
 
\section{Cutting Stock Reformulation}\label{section:cg}

Dantzig-Wolfe decomposition~\cite{Dantzig1960} is a classic solution approach
for structured \LPs. It can be used to decompose an \MILP into a \emph{master
  problem} and one or several \emph{sub-problems}. Advantages of the
decomposition are that it yields stronger relaxations if the sub-problems are
nonconvex and can aggregate equivalent sub-problems to reduce
symmetries~\cite{Lubbecke2005,Desaulniers2006}.

In this section, we apply a Dantzig-Wolfe decomposition to~\eqref{eq:minlp} and
obtain a reformulation that is similar to the one of the one-dimensional Cutting
Stock Problem~\cite{Vance1998}. The key idea is to reformulate \rcpp in order to
not assign rings explicitly to rectangles, but rather choose a set of feasible
rectangle packings to cover the demand for each ring type. The resulting
reformulation is a pure integer program (\IP) containing exponentially many
variables. We solve this reformulation via column generation.

We call a vector $F\in\Z_{+}^\ntypes$ \emph{packable}
if it is possible to pack $F_t$ many rings for all types $t \in \typeset$ together (and thus a
total number of $\sum_{t \in \typeset}F_t$ many rings) into a $\width\times\height$ rectangle.
A packable $F\in\Z_{+}^\ntypes$ corresponds to a feasible solution
of~\eqref{eq:minlp} when considering only a single rectangle.
Denote by
\begin{align*}
  \F := \{ F \in \Z_{+}^\ntypes \, : \, F \text{ is packable}\}
\end{align*}
the set of all feasible packings of rings into a rectangle. Note that
without loss of generality, we can bound $F_t$ by the demand $\demand_t$ for
each $t \in \typeset$. After applying Dantzig-Wolfe decomposition
to~\eqref{eq:minlp}, we obtain the following formulation, $\DW(\F)$:
\begin{subequations} \label{eq:dwmodel}
\begin{align}
  &&\min        \;    & \sum_{F \in \F} z_F \label{cg:reform1:obj} \\
  &&\text{s.t.} \quad & \sum_{F \in \F} F_t \cdot z_F \ge \demand_t && \fa t \in \typeset \label{cg:reform1:demand} \\
  &&                  & z_F \in \Z_{+} && \fa F \in \F
\end{align}
\end{subequations}
This formulation contains an exponential number of integer variables with
respect to $\ntypes$. Each variable $z_F$ counts how often $F$ is
chosen. Minimizing the total number of rectangles is equivalent
to~\eqref{cg:reform1:obj}. Inequality~\eqref{cg:reform1:demand} ensures that
each ring of type $t$ is packed at least $\demand_t$ many times.
In the following, we call the \LP relaxation of $\DW(\Fprime)$ for
$\Fprime \subseteq \F$ the \emph{restricted master problem} of $\DW(\F)$.

An advantage of~\eqref{eq:dwmodel} is that we do not explicitly assign rings to
positions inside some rectangles, like in~\eqref{eq:minlp}, nor distinguish
between rings of the same type. This breaks symmetry that arises from simply
permuting rectangles.  However, $\F$ is a priori unknown and its size is
exponential in the input size of an \rcpp instance.

A column generation algorithm is used to solve the LP relaxation of $\DW(\F)$ by
iteratively updating $\DW(\F)$ with improving columns corresponding to feasible
rectangular packings.  A feasible rectangular packing $F \in \F$ is an improving
column if the corresponding constraint
\begin{align*}
  \sum_{t \in \typeset} F_t \pi^*_t \le 1
\end{align*}
is violated by the dual solution $\pi^*$ of the restricted master problem
$\DW(\Fprime)$. The \LP relaxation of $\DW(\F)$ is solved to optimality when
$\sum_{t \in \typeset} F_t \pi^*_t \le 1$ holds for all $F \in \F \, \backslash \, \Fprime$. Otherwise, an
$F \in \F \, \backslash \, \Fprime$ with $\sum_{t \in \typeset} F_t \pi^*_t > 1$ is
added to $\Fprime$ and the process iterates until $\DW(\F)$ is solved to
optimality.

The most violated dual constraint is found by solving
\begin{equation}\label{prob:pricing1}
  \min \left\{ 1 - \sum_{t \in \typeset} \pi^*_t F_t : F \in \Z_{+}^\ntypes, \, F \text{ is packable}\right\}.
\end{equation}
This problem is a maximization variant of \rcpp for which we need to find a
subset of rings such that they can be packed into a single rectangle, see
Remark~\ref{remark:rcppmax}. The objective gain of each ring of type $t$ is
exactly $\pi^*_t$. The \LP relaxation of~\eqref{eq:dwmodel} is solved to
optimality if the solution value of~\eqref{prob:pricing1} is
non-negative. Otherwise, we find an improving column that, after it has been
added, might decrease the objective value of the restricted master problem
$\DW(\Fprime)$.

As explained in Remark~\ref{remark:rcppmax},~\eqref{prob:pricing1} can be
modeled as a nonconvex \MINLP that contains the selection and positioning of
rings in a rectangle. Unfortunately, solving this problem is very difficult for
a general \MINLP solver. In our experiments none of the resulting pricing
problems~\eqref{prob:pricing1} could be solved to optimality
or even generate an improving column for $\DW(\Fprime)$ in two hours. For this
reason, Formulation~\eqref{eq:dwmodel} is not suitable for solving \rcpp to
global optimality.
 
\section{Pattern-based Dantzig-Wolfe Decomposition using One-level Packings}
\label{section:pdw}

The main drawback of~\eqref{eq:dwmodel} is that the resulting pricing
problems~\eqref{prob:pricing1} are intractable. This is due to two major
difficulties, i) the positioning of rings inside a rectangle and ii) the
combinatorial decisions of how to put rings into other rings. Together, they
make the sub-problems much more difficult to solve than the \IP master
problem. The recursive decisions of packing rings into each other introduces
much symmetry to~\eqref{prob:pricing1}. Rings of the same type, and all rings
packed inside those, can be swapped inside a rectangle and yield an equivalent
solution. This symmetry appears in each recursion level of the sub-problems. See
Figure~\ref{fig:symmetricsol} for an example of this kind of symmetry.

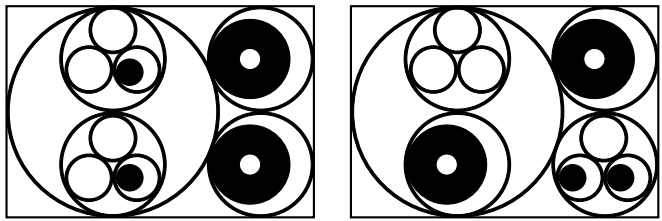
\begin{figure}[ht]
  \centering
  \centering
\begin{minipage}{0.30\textwidth}
  \centering
  \begin{tikzpicture}[scale=0.35]
    \draw[thick] (0,0) -- (11.55,0) -- (11.55,8.0) -- (0, 8.0) -- cycle;
    \foreach \id/\x/\y/\R/\r in
             {
               1/4.0/4.0/4.0/3.9,
               2/4.0/6.0/2.0/1.9,
               2/4.0/2.0/2.0/1.9,
               3/4.0/7.1/0.9/0.8,
               3/3.1/5.6/0.9/0.8,
               3/4.9/5.6/0.9/0.8,
               3/4.0/3.0/0.9/0.8,
               3/3.1/1.5/0.9/0.8,
               3/4.9/1.5/0.9/0.8,
               2/9.55/6.0/2.0/1.9,
               2/9.55/2.0/2.0/1.9,
               4/9.15/2.0/1.5/0.4,
               4/9.15/6.0/1.5/0.4,
               5/4.63/1.5/0.5/0.0,
               5/4.63/5.5/0.5/0.0
             }
             \draw[fill=black,even odd rule] (\x,\y) circle (\R) circle (\r);
  \end{tikzpicture}
\end{minipage}
\begin{minipage}{0.30\textwidth}
  \centering
  \begin{tikzpicture}[scale=0.35]
    \draw[thick] (0,0) -- (11.55,0) -- (11.55,8.0) -- (0, 8.0) -- cycle;
    \foreach \id/\x/\y/\R/\r in
             {
               1/4.0/4.0/4.0/3.9,
               2/4.0/6.0/2.0/1.9,
               2/4.0/2.0/2.0/1.9,
               3/4.0/7.1/0.9/0.8,
               3/3.1/5.6/0.9/0.8,
               3/4.9/5.6/0.9/0.8,
               3/9.5/3.0/0.9/0.8,
               3/8.6/1.5/0.9/0.8,
               3/10.4/1.5/0.9/0.8,
               2/9.55/6.0/2.0/1.9,
               2/9.55/2.0/2.0/1.9,
               4/3.6/2.0/1.5/0.4,
               4/9.15/6.0/1.5/0.4,
               5/8.33/1.5/0.5/0.0,
               5/10.13/1.5/0.5/0.0
             }
             \draw[fill=black,even odd rule] (\x,\y) circle (\R) circle (\r);
  \end{tikzpicture}
\end{minipage}
   \captionsetup{justification=centering}
  \caption{Two equivalent feasible packings obtained by swapping rings packed
    inside other rings.}
  \label{fig:symmetricsol}
\end{figure}

The main idea of the following reformulation is to break this type of symmetry
and shift the recursive decisions from the sub-problem to the master
problem. This helps balance the complexity of both problems, which is crucial when
using a column generation algorithm.

In the following, we introduce the concept of \emph{circular} and
\emph{rectangular patterns}. These patterns describe possible packings of
circles into rings or rectangles. The circles act like
placeholders. Specifically, the circles just describe what type of rings might
be placed in the circles, but not how these rings are filled with other
rings. After choosing one pattern we are able to choose other patterns that fit
into the circles of the selected one. The recursive part of \rcpp boils down to
a counting problem of patterns and minimizing the total number of rectangles.

\subsection{Circular Patterns}
\label{section:circularpatterns}

A \emph{circle} of type $t \in \typeset$ is a ring with external radius
$\rext_t$ and inner radius $\rint_t = 0$. This means that neither circles nor
rings can be put into a circle.
Similarly to the definition of elements in $\F$, we call a tuple $(t,P)\in
\typeset \times \Z_{+}^\ntypes$ a \emph{circular pattern} if it is possible to
pack~$P_1$ many circles of type~$1$, $P_2$ many circles of type~$2$, \ldots,
$P_\ntypes$ many circles of type~$\ntypes$ into a larger ring of type~$t$.
As an example, $(3,(2,1,0))$ is a circular pattern with an outer ring
of type three which contains two circles with external radius $\rext_1$ and one
circle with radius $\rext_2$. Since $\rext_3$ can not be pack within any ring, the final index will
always be 0.  However, it is included in the definition of circular patterns for
completeness.
Figure~\ref{fig:patterns} shows all possible packings for three different ring
types.

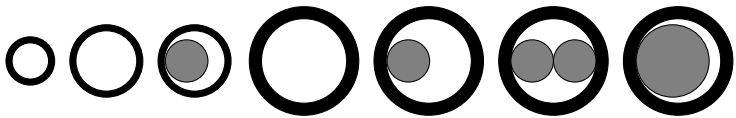
\begin{figure}[ht]
  \centering
  \begin{tikzpicture}[scale = 0.4]

   \foreach \id/\x/\y/\R/\r in
           {
             1/0.0/-2.0/0.8/0.6,
             2/2.5/-2.0/1.2/1.0,
             2/5.4/-2.0/1.2/1.0,
             3/9.0/-2.0/1.8/1.4,
             3/13.1/-2.0/1.8/1.4,
             3/17.2/-2.0/1.8/1.4,
             3/21.3/-2.0/1.8/1.4
           }
           \draw[fill=black,even odd rule] (\x,\y) circle (\R) circle (\r);

   \foreach \id/\x/\y/\R in
           {
             1/5.14/-2.0/0.7,
             1/12.43/-2.0/0.7,
             1/16.5/-2.0/0.7,
             1/17.9/-2.0/0.7,
             2/21.12/-2.0/1.2
           }
           \draw[fill=gray,even odd rule] (\x,\y) circle (\R);
\end{tikzpicture}
   \captionsetup{justification=centering}
  \caption{All possible circular patterns for three different ring types: \\
    $(1,(0,0,0)),(2,(0,0,0)),(2,(1,0,0)),(3,(0,0,0)),(3,(1,0,0)),(3,(2,0,0)),(3,(0,1,0))$}
  \label{fig:patterns}
\end{figure}

Using the definition of circular patterns we decompose a packing of rings into a
ring $R$. Each ring that is directly placed into $R$, i.e., is not contained in
another larger ring, is replaced by a circle with the same external radius. We
apply the decomposition to each replaced ring recursively. As a result, ring $R$
decomposes into a set of circular patterns. Starting from these circular
patterns, we can reconstruct $R$ by recursively replacing circles in a pattern
with other circular patterns. We denote by
\begin{align*}
  \cp := \{ (t,P) \in \typeset \times \Z_{+}^\ntypes \mid (t,P) \text{ is a circular pattern} \}
\end{align*}
the set of all possible circular patterns. The previously described
decomposition shows that any recursive packing of rings can be constructed by
using circular patterns of~$\cp$.

In general, the cardinality of $\cp$ is exponential in $\ntypes$ and depends on
the internal and external radii of the rings. For example, increasing the inner
radius of the largest ring will lead to many more possibilities to pack circles
into this ring. In contrast, decreasing external radii results in fewer circular
patterns.

Figure~\ref{fig:patterns} shows that there are circular patterns that are
dominated by others, e.g., the third pattern dominates second one. Using
dominated circular patterns would leave some unnecessary free space in some
rectangles, which can be avoided in an optimal solution. In
Section~\ref{section:enumeration} we discuss this domination relation and
present an algorithm to compute all non-dominated circular patterns.

\subsection{Rectangular Patterns}
\label{section:rectpatterns}

In the following reformulation of \rcpp, we use circular patterns to shift the
decisions of how to put rings into other rings to the master problem. Similar to
a circular pattern, we call $P \in \Z_{+}^\typeset$ a \emph{rectangular pattern}
if and only if $P_t$ many circles with radius $\rext_t$ for each
$t \in \typeset$ can be packed together into a rectangle of size $\width$ times
$\height$. Let
\begin{align*}
  \rp := \{ P \in \Z_{+}^\ntypes \mid P \text{ is a rectangular pattern} \}
\end{align*}
be the set of all rectangular patterns. As in Formulation~\eqref{eq:dwmodel},
only the number of packed circles matters, not their position in the rectangular
pattern.

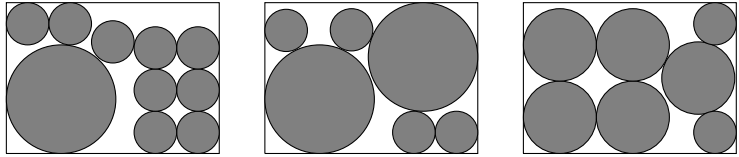
\begin{figure}[ht]
  \centering
  \begin{tikzpicture}[scale=0.4]
  \draw (-5.5,-2.5) --++ (7,0) --++ (0,5) --++ (-7,0) --++ (0,-5);
  \draw[fill=gray] (-3.7,-0.7) circle (1.8);
  \draw[fill=gray] (0.8,-1.8) circle (0.7);
  \draw[fill=gray] (0.8,-0.4) circle (0.7);
  \draw[fill=gray] (0.8, 1.0) circle (0.7);
  \draw[fill=gray] (-0.6,-1.8) circle (0.7);
  \draw[fill=gray] (-0.6,-0.4) circle (0.7);
  \draw[fill=gray] (-0.6, 1.0) circle (0.7);
  \draw[fill=gray] (-4.8, 1.8) circle (0.7);
  \draw[fill=gray] (-3.4, 1.8) circle (0.7);
  \draw[fill=gray] (-2.0, 1.2) circle (0.7);

  \draw (3,-2.5) --++ (7,0) --++ (0,5) --++ (-7,0) --++ (0,-5);
  \draw[fill=gray] (4.8,-0.7) circle (1.8);
  \draw[fill=gray] (8.2,0.7) circle (1.8);
  \draw[fill=gray] (3.7,1.58) circle (0.7);
  \draw[fill=gray] (5.85,1.6) circle (0.7);
  \draw[fill=gray] (9.3,-1.8) circle (0.7);
  \draw[fill=gray] (7.9,-1.8) circle (0.7);

  \draw (11.5,-2.5) --++ (7,0) --++ (0,5) --++ (-7,0) --++ (0,-5);
  \draw[fill=gray] (12.7,-1.3) circle (1.2);
  \draw[fill=gray] (15.1,-1.3) circle (1.2);
  \draw[fill=gray] (17.25, -0.0) circle (1.2);
  \draw[fill=gray] (12.7, 1.1) circle (1.2);
  \draw[fill=gray] (15.1, 1.1) circle (1.2);
  \draw[fill=gray] (17.8, 1.8) circle (0.7);
  \draw[fill=gray] (17.8, -1.8) circle (0.7);
\end{tikzpicture}
   \caption{Three different rectangular patterns:\\ $(9,0,1),(4,0,2),(2,5,0)$}
  \label{fig:rectpatterns}
\end{figure}

In contrast to verifying if $(t,P) \in \typeset \times \Z_{+}^\ntypes$ is a
circular pattern, checking whether $P \in \Z_{+}^\ntypes$ is a rectangular
pattern---a classical \cpp---can be much more difficult. Typically, many more
circles fit into a rectangle than into a ring.  This results in a large number
of circles that need to be considered in a verification problem, which is in
practice difficult to solve.

\subsection{Exploiting Recursion}
\label{subsection:improvedreformulation}

Using the circle and rectangle patterns described above, we develop a
pattern-based Dantzig-Wolfe decomposition for \rcpp with nonlinear sub-problems.
Instead of placing rings explicitly into each other, we use patterns to remodel
the recursive part. The key idea is that circles, inside rectangular or circular
patterns, are replaced by circular patterns with the same external radius. These
circular patterns contain circles that can be replaced by other circular
patterns.
More precisely, after choosing a rectangular pattern $P \in \rp$, it is possible
to choose $P_t$ circular patterns of the form $(t,P') \in \cp$, which can be
placed into $P$. Again, for each~$P'$ we can choose $P'_{t'}$ many circular
patterns of the form $(t', P'')$, which can be placed in $P'$. This process can
continue until the smallest packable circle is considered. The recursive
structure of the \rcpp---the placement of rings into other rings---is modeled by
counting the number of used rectangular and circular patterns.

Figure~\ref{fig:cpdecomposition} illustrates this idea. A circular pattern
replaces a circle if there is an arrow from the pattern to the circle. Each
circle of a pattern can be used as often as the pattern is used. It follows that
the number of outgoing edges of a circular pattern is equal to the number of
uses of the pattern. The combinatorial part of \rcpp reduces to adding edges
from circular patterns to circles.

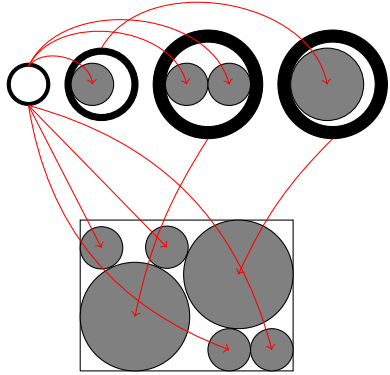
\begin{figure}[ht]
  \centering
  \begin{tikzpicture}[scale = 0.4]

  \foreach \id/\x/\y/\R/\r in
  {
    1/11.3/-2.0/0.7/0.6,
    2/13.7/-2.0/1.2/1.0,
    3/17.2/-2.0/1.8/1.4,
    3/21.3/-2.0/1.8/1.4
  }
  \draw[fill=black,even odd rule] (\x,\y) circle (\R) circle (\r);

  \foreach \id/\x/\y/\R in
  {
    1/13.4/-2.0/0.7,
    1/16.5/-2.0/0.7,
    1/17.9/-2.0/0.7,
    2/21.12/-2.0/1.2
  }
  \draw[fill=gray,even odd rule] (\x,\y) circle (\R);

  \newcommand*{\xshift}{10}
  \newcommand*{\yshift}{-9}
  \draw (3 + \xshift,-2.5 + \yshift) --++ (7,0) --++ (0,5) --++ (-7,0) --++ (0,-5);
  \foreach \x/\y/\R in
  {
    4.8 / -0.7/ 1.8,
    8.2 /  0.7/ 1.8,
    3.7 / 1.58/ 0.7,
    5.85/  1.6/ 0.7,
    9.3/  -1.8/ 0.7,
    7.9/  -1.8/ 0.7
  }
  \draw[fill=gray] (\x + \xshift,\y + \yshift) circle (\R);

  \draw[red,->] (11.3,-2.7) -- (3.7 + \xshift, 1.58 + \yshift);
  \draw[red,->] (11.3,-2.7) -- (5.85 + \xshift, 1.6 + \yshift);
  \draw[red,->] (11.3,-2.7) to [bend left=30] (9.3  + \xshift, -1.8 + \yshift);
  \draw[red,->] (11.3,-2.7) to [bend right=30](7.9  + \xshift, -1.8 + \yshift);

  \draw[red,->] (17.2,-3.8) to [bend right=10] (4.8  + \xshift, -0.7 + \yshift);
  \draw[red,->] (21.3,-3.8) to [bend right=10] (8.2  + \xshift, 0.7 + \yshift);

  \draw[red,->] (13.7,-0.8) to [bend left=70] (21.12, -2.0);

  \draw[red,->] (11.3,-1.4) to [bend left=70] (13.4,-2.0);

  \draw[red,->] (11.3,-1.4) to [bend left=70] (16.5,-2.0);
  \draw[red,->] (11.3,-1.4) to [bend left=70] (17.9,-2.0);
\end{tikzpicture}
   \caption{An example with four circular patterns and a rectangular pattern
    showing how patterns are used to model the combinatorial part of \rcpp. Each
    line connects a circular pattern to a circle. The number of outgoing edges
    is equal to the number of rings that are used.}
  \label{fig:cpdecomposition}
\end{figure}

Following this idea, we introduce integer variables
\begin{itemize}
\item $z_{C} \in \Z_{+}$ for each circular pattern $C \in \cp$ and
\item $z_{P} \in \Z_{+}$ for each rectangular $P \in \rp$
\end{itemize}
in order to count the number of used circular and rectangular patterns (in the
pattern-based formulation). We reformulate \rcpp by the following \IP
formulation $\PDW(\rp)$, which is similar to the multi-stage cutting stock
formulation that has been presented by Muter et al.~\cite{Muter2013}.
\begin{subequations} \label{eq:pdwmodel}
\begin{align}
  && \min \sum_{P \in \rp} z_P \label{eq:pdwmodel:obj} \quad\,\\
  && \text{s.t.} \sum_{C = (t,P) \in \cp} z_C &\ge \demand_t && \fa t \in \typeset \label{eq:pdwmodel:demand} \\
  && \sum_{P \in \rp} P_t \cdot z_P + \sum_{C = (t',P) \in \cp} P_t \cdot z_C &\ge \sum_{C = (t,P) \in \cp} z_C && \fa t \in \typeset \label{eq:pdwmodel:recursive} \\
  && z_C & \in \Z_{+} && \fa C \in \cp \\
  && z_P & \in \Z_{+} && \fa P \in \rp
\end{align}
\end{subequations}
Objective~\eqref{eq:pdwmodel:obj} minimizes the total number of used
rectangles. Constraint \eqref{eq:pdwmodel:demand} ensures that the demand for
each ring type is satisfied. The recursive decisions how to place rings into
each other are implicitly modeled by~\eqref{eq:pdwmodel:recursive}.
Each selection of a pattern $P \in \rp$ or $(t',P) \in \cp$ allows us to choose
$P_t$ circular patterns of the type $t$.
Note that at least one rectangular pattern needs to be selected before circular
patterns can be packed. This is true because the largest ring only fits into a
rectangular pattern.
To aid in understanding the formulation of $\PDW(\rp)$, a small
example based on the circular and rectangular patterns in Figures
\ref{fig:patterns} and \ref{fig:rectpatterns} respectively is presented in
Example~\ref{example:pdwmodel}.

\begin{example}\label{example:pdwmodel}\normalfont
  Let $\{C_1,\ldots,C_7\}$ be the set of circular patterns of
  Figure~\ref{fig:patterns} and $\{P_1,P_2,P_3\}$ be the subset of rectangular
  patterns of Figure~\ref{fig:rectpatterns}. The patterns are labeled from left
  to right. Problem~\eqref{eq:pdwmodel} then reads as
  \begin{subequations}
  \begin{align}
     &&    \min z_{P_1} + z_{P_2} + z_{P_3} \\
   \text{s.t.} && z_{C_1} & \ge D_1 \label{eq:example:demand1} \\
    &&              z_{C_2} + z_{C_3} & \ge D_2 \label{eq:example:demand2} \\
    &&              z_{C_4} + z_{C_5} + z_{C_6} + z_{C_7} & \ge D_3 \label{eq:example:demand3} \\
    &&              z_{C_3} + z_{C_5} + 2 z_{C_6} + 9 z_{P_1} + 4 z_{P_2} + 2 z_{P_3} & \ge z_{C_1} \label{eq:example:recursive1} \\
    &&              z_{C_7} + 5 z_{P_3} & \ge z_{C_2} + z_{C_3} \label{eq:example:recursive2} \\
    &&              z_{P_1} + 2 z_{P_2} & \ge z_{C_4} + z_{C_5} + z_{C_6} + z_{C_7} \label{eq:example:recursive3} \\
    &&              z_{C_i} &\in \Z_{+} \quad \fa i \in \{1,\ldots,7\} \\
    &&              z_{P_i} &\in \Z_{+} \quad \fa i \in \{1,2,3\}
  \end{align}
  \end{subequations}
  Constraints \eqref{eq:example:demand1}--\eqref{eq:example:demand3} ensure that
  the demand for each ring type is satisfied. The left-hand side of constraints
  \eqref{eq:example:demand1}--\eqref{eq:example:demand3} only contain columns
  corresponding to the circular patterns of ring type 1 to 3 respectively. The
  constraints \eqref{eq:example:recursive1}--\eqref{eq:example:recursive3}
  model the recursive structure of the problem. Columns corresponding to
  circular patterns for ring types 1 to 3 are observed on the right-hand side of
  constraints \eqref{eq:example:recursive1}--\eqref{eq:example:recursive3}
  respectively. The left-hand side of constraints
  \eqref{eq:example:recursive1}--\eqref{eq:example:recursive3} contain columns
  corresponding to rectangular and circular patterns that pack the ring type
  represented on the right-hand side of the respective constraints.
\end{example}

A drawback of~\eqref{eq:pdwmodel} is the exponential number of rectangular and
circular pattern variables.
To address this difficulty, we develop a column enumeration algorithm to compute
all (relevant) circular patterns used in~\eqref{eq:pdwmodel}, which is presented
in Section~\ref{section:enumeration}.
We observed in our experiments that for many instances this algorithm
successfully enumerates all circular patterns in a reasonable amount of time.

Since the size of the rectangles is much larger than the external radii $\rext$,
it is intractable to enumerate all rectangular patterns. To overcome this
difficulty, we use a column generation approach to solve the \LP relaxation
of~\eqref{eq:pdwmodel} that dynamically generates rectangular patterns
variables. We call the \LP relaxation of $\PDW(\rp')$ the restricted master
problem of~\eqref{eq:pdwmodel} for a subset of rectangular patterns $\rp'
\subseteq \rp$. In order to find an improving column for $\PDW(\rp')$, we solve
a weighted \cpp for a single rectangle.

More precisely, let $\lambda \in \R_+^\ntypes$ be the non-negative vector of
dual multipliers for Constraints~\eqref{eq:pdwmodel:recursive} after solving the
\LP relaxation of $\PDW(\rp')$ for the current set of rectangular patterns
$\rp'\subset\rp$. To compute a rectangular pattern with negative reduced cost
we solve
\begin{equation} \label{eq:pdwmodel:pricing}
  \min_{P \in \rp \, \backslash \, \rp'} \left\{1 - \sum_{t \in \typeset} \lambda_t P_t\right\},
\end{equation}
which can be modeled as a weighted \cpp for a single rectangle. Let $P^*$ be an
optimal solution to~\eqref{eq:pdwmodel:pricing}.  If $1 - \sum_{t \in \typeset}
\lambda_t P^*_t$ is negative, then $P^*$ is an improving rectangular pattern,
whose corresponding variable needs to be added to the restricted master problem
of $\PDW(\rp')$. Otherwise, the \LP relaxation of~\eqref{eq:pdwmodel} is solved
to optimality.

The pricing problem is $\mathcal{NP}$-hard~\cite{Lenstra1979} and difficult to
solve in practice. The number of variables in this problem depends on the number
of different ring types $\ntypes$ and on the demand vector $\demand$. When
solving~\eqref{eq:pdwmodel:pricing} we need to consider the index set of
individual circles
\begin{align*}
  \{ i^1_1, \ldots, i^1_{\demand_1}, i^2_{1}, \ldots, i^2_{\demand_2}, \ldots, i^\ntypes_1, \ldots, i^\ntypes_{\demand_\ntypes} \}
\end{align*}
containing $\demand_t$ indices that correspond to circles with radius $\rext_t$
for each $t \in \typeset$. The number of copies for type $t$ can be reduced to
$\min \{\demand_t, \floor{\frac{\pi (\rext_t)^2}{\width \height}} \}$, which is
an upper bound on the number of rings of type $t$ in a $\width$ times $\height$
rectangle.
For simplicity, denote with $\circleset$ the index set of all
individual circles.  Let $\rext_i$ be the external radius and $\type(i)$ the
type of circle $i \in \circleset$. The circle packing problem formulation
of~\eqref{eq:pdwmodel:pricing} reads then as
\begin{subequations} \label{eq:pdwmodel:pricingformulation}
\begin{align}
  && \min \; & 1 - \sum_{i \in \circleset} \lambda_{\type(i)} z_i \\
  \text{s.t.} && \norm{\mvec{x_i}{y_i} - \mvec{x_j}{y_j}}_2^2 &\ge (\rext_i + \rext_j)^2(z_{i} + z_{j} -1) && \fa i, j \in \circleset : i \neq j \\
  && \rext_i &\le x_i \le \width - \rext_i && \fa i \in \circleset \\
  && \rext_i &\le y_i \le \height - \rext_i && \fa i \in \circleset \\
  && z_i & \in \{0,1\} && \fa i \in \circleset
\end{align}
\end{subequations}
where $(x_i,y_i)$ is the center of circle $i \in \circleset$ and $z_i$ the
decision variable whether circle $i$ has been packed, i.e., $z_i = 1$.

After solving the continuous relaxation of~\eqref{eq:pdwmodel}, it might happen
that $z_P^*$ is fractional for a rectangular pattern $P \in \rp'$. In this case
branching is required to ensure global optimality.
A general branching strategy has been introduced by~\cite{Vanderbeck1996}, which
has been successfully used in a branch-and-price algorithm for the
one-dimensional cutting stock problem~\cite{Vance1998}. This branching rule can
be applied when solving~\eqref{eq:pdwmodel}, however, it increases
the complexity of the pricing problems~\eqref{eq:pdwmodel:pricing}.
Since~\eqref{eq:pdwmodel:pricing} is very difficult to solve---the vast majority of
pricing problems cannot be solved to optimality in the root node---employing
branch-and-price is deemed impractical. As such, Section~\ref{section:price-and-verify} presents a \emph{price-and-verify} algorithm,
which builds upon price-and-branch, as a practical method for
solving the \rcpp.  With techniques discussed in
  Section~\ref{section:validbounds} this results in an algorithm that is able to
  prove global optimality for many \rcpp instances.

\subsection{Strength of Dantzig-Wolfe reformulations}
\label{subsection:strengthFormulations}

In the following, we show that the two presented formulations~\eqref{eq:dwmodel}
and~\eqref{eq:pdwmodel} provide the same \LP bound.

\begin{theorem} \normalfont \label{theorem:lprelax} Let $\LP(DW)$ and $\LP(PDW)$
  be the value of the \LP relaxation of~\eqref{eq:dwmodel}
  and~\eqref{eq:pdwmodel}, respectively. Then $\LP(PDW) = \LP(DW)$.
\end{theorem}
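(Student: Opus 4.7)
The plan is to prove $\LP(PDW) \leq \LP(DW)$ and $\LP(DW) \leq \LP(PDW)$ separately.

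For the first inequality I proceed by primal construction. Given any feasible LP(DW) solution $z^{DW}$, for each $F \in \F$ with $z_F^{DW} > 0$ I fix a concrete packing realization of $F$. Such a realization decomposes into exactly one rectangular pattern $P^0(F) \in \rp$ (from the rings placed directly in the rectangle, viewed as circles) and, for every ring $i$ in the packing, a circular pattern $C_i(F) = (\type(i), P^i(F)) \in \cp$ describing the rings directly contained in ring~$i$. Setting
\[
  z_P^{PDW} := \sum_{F \,:\, P^0(F) = P} z_F^{DW}, \qquad z_C^{PDW} := \sum_F z_F^{DW} \cdot |\{i : C_i(F) = C\}|,
\]
the objectives coincide trivially, the demand constraint reduces to $\sum_F F_t z_F^{DW} \geq \demand_t$, and the recursive constraint holds with equality for each $t$: both sides collapse to $\sum_F F_t z_F^{DW}$ since every type-$t$ ring in a realization of $F$ contributes exactly one type-$t$ slot, either in the rectangular pattern $P^0(F)$ (if directly placed in the rectangle) or in exactly one larger circular pattern (otherwise).

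For the reverse inequality I pass to the LP duals. The dual of $\DW(\F)$ reads $\max\{\sum_t \demand_t \pi_t : \sum_t F_t \pi_t \leq 1 \text{ for all } F \in \F,\ \pi \geq 0\}$; the dual of $\PDW(\rp)$ introduces an additional multiplier $\lambda_t \geq 0$ per recursive constraint and requires $\sum_t P_t \lambda_t \leq 1$ for every $P \in \rp$ together with $\pi_t + \sum_{t'} P_{t'} \lambda_{t'} \leq \lambda_t$ for every $(t, P) \in \cp$. Starting from an optimal dual LP(DW) solution $\pi^*$, I build $\lambda^*$ recursively in order of increasing ring type by $\lambda^*_t := \pi^*_t + \max\{\sum_{t'} P_{t'} \lambda^*_{t'} : (t, P) \in \cp\}$. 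Because every circular pattern $(t, P) \in \cp$ only contains slots of smaller types $t' < t$, this recursion is well-defined, $\lambda^* \geq 0$ follows by induction, and the circular-pattern dual constraints hold by construction.

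The main obstacle is verifying the rectangular-pattern dual constraint $\sum_t P_t \lambda^*_t \leq 1$ for every $P \in \rp$. My plan is to unfold the recursion greedily: in $P$, each type-$t$ slot is filled with a $\lambda^*$-maximizing circular pattern $(t, P^{(t)})$, and the same substitution is applied inside every resulting circular pattern, terminating at empty patterns of the form $(t, 0)$. This construction produces a concrete packing $F_P \in \F$ rooted at $P$, and telescoping along the recursion tree gives $\sum_t P_t \lambda^*_t = \sum_t (F_P)_t \pi^*_t$, which is at most $1$ by dual LP(DW) feasibility of $\pi^*$ applied to $F_P$. Hence $(\pi^*, \lambda^*)$ is feasible for the dual of $\PDW(\rp)$ with the same objective as $\pi^*$, so strong duality yields $\LP(DW) \leq \LP(PDW)$.
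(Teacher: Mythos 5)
Your proof is correct, but it follows a genuinely different route from the paper. The paper introduces an auxiliary ``mixed'' formulation~\eqref{eq:mdwmodel} whose rectangles may contain both rings and circles; since it relaxes both \eqref{eq:dwmodel} and \eqref{eq:pdwmodel}, its optimum $Z^*$ is a common lower bound, and the paper then closes both gaps by purely primal exchange arguments on optimal solutions chosen to minimize the number of rings (resp.\ circular patterns) in the support. You instead prove the two inequalities by different means: $\LP(PDW)\le\LP(DW)$ via a direct primal mapping that decomposes each realized packing $F\in\F$ into one rectangular pattern and one circular pattern per ring (your verification that the recursive constraint~\eqref{eq:pdwmodel:recursive} holds with equality is exactly right, since each ring occupies precisely one slot), and $\LP(DW)\le\LP(PDW)$ via LP duality, constructing multipliers $\lambda^*_t$ bottom-up as the best achievable recursive ``value'' of a type-$t$ ring and unfolding any $P\in\rp$ into a packing $F_P\in\F$ to verify the rectangular dual constraints. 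Your approach avoids the auxiliary formulation and the minimality/exchange bookkeeping, and the explicit dual multipliers $\lambda^*_t$ are an interpretable by-product that is natural in the column-generation setting; the paper's approach stays entirely primal and treats both directions symmetrically through one construction. Two points you should make explicit: the recursion defining $\lambda^*_t$ and the unfolding both rely on the facts that $\cp$ is finite, that nesting strictly decreases the external radius (so the recursion is well-founded and terminates at empty patterns), and that recursively substituting circular patterns into circles reconstructs a genuine element of $\F$ (the reconstruction property stated in Section~\ref{section:circularpatterns}); and both directions require $\F$, $\cp$, $\rp$ to be taken without the optional demand truncation, a convention the paper's own proof also uses implicitly.
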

\begin{proof}
  To show the equality of the \LP bounds, we consider an ``extended''
  Dantzig-Wolfe reformulation.  As a generalization of the variables $z_F$ from
  \eqref{eq:dwmodel}, where $F\in\F$ encodes a rectangle of recursively packable
  rings, and $z_P$ from \eqref{eq:pdwmodel}, where $P\in\rp$ encodes
  a rectangular pattern,
  we introduce variables for ``mixed'' rectangles that may contain both rings and circles. Let $z_{(F,P)}$ be the number of such
  ``mixed'' rectangles, and denote with
  \begin{equation*}
    \mrp = \{ (F,P) \in \Z_{+}^\ntypes \times \Z_{+}^\ntypes : (F,P) \text{ is packable } \}
  \end{equation*}
  the set of mixed rectangles.  Here, $(F,P)$ is packable if
  $F_t$ rings of type $t$ and $P_t$ circles of type $t$ can be packed into one
  rectangle without overlap, collectively over all $t$. A ring may contain smaller rings and circles, but
  circles cover their full interior, see Figure~\ref{fig:mixedrectangle} for an
  example.

  \begin{figure}[ht]
    \centering
    \centering
\begin{tikzpicture}[scale=0.4]
  \draw[thick,draw=black] (0,0) -- (8.0,0) -- (8.0,7.0) -- (0, 7.0) -- cycle;
  \foreach \id/\x/\y/\R/\r in
           {
             8/4.721279/6.098191/0.9/0.6,
             9/0.9/2.317016/0.9/0.6,
             12/5.551494/2.4/2.4/2.2,
             13/2.14/0.9/0.9/0.6,
             12/4.6/2.8/1.2/1.0,
             13/4.6/2.8/0.9/0.6,
             13/6.7/3.0/0.9/0.6
           }
           \draw[fill=black,draw=black,even odd rule] (\x,\y) circle (\R) circle (\r);
  \foreach \id/\x/\y/\R/\r in
           {
             2/6.8/5.8/1.2/1.0,
             3/2.00128/5.0/2.0/0.4,
             13/6.0/1.2/0.9/0.6
           }
           \draw[fill=gray,draw=black] (\x,\y) circle (\R);
\end{tikzpicture}
     \captionsetup{justification=centering}
    \caption{An example for a mixed rectangle packed with rings and circles.}
    \label{fig:mixedrectangle}
  \end{figure}
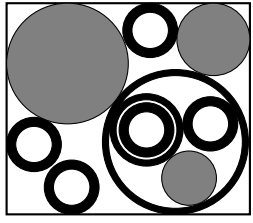

  Then define the \LP
  \begin{subequations} \label{eq:mdwmodel}
    \begin{align}
      && \min \sum_{(F,P) \in \mrp} z_{(F,P)} \label{eq:mdwmodel:obj} \quad\,\\
      && \text{s.t.} \sum_{(F,P) \in \mrp} F_t \cdot z_{(F,P)} + \sum_{C = (t,P) \in \cp} z_C &\ge \demand_t && \fa t \in \typeset \label{eq:mdwmodel:demand} \\
      && \sum_{(F,P) \in \mrp} P_t \cdot z_{(F,P)} + \sum_{C \in \cp} P_t \cdot z_C &\ge \sum_{C = (t,P) \in \cp} z_C && \fa t \in \typeset \label{eq:mdwmodel:recursive} \\
      && z_C & \geq 0 && \fa C \in \cp \\
      && z_{(F,P)} & \geq 0 && \fa (F,P) \in \mrp
    \end{align}
  \end{subequations}
  and let $Z^*$ be its optimal objective value. By construction,
  \eqref{eq:mdwmodel} contains the feasible solutions of \eqref{eq:dwmodel}
  (setting $z_C=0$ and $z_{(F,P)}=0$ for all $P\not=0$) and \eqref{eq:pdwmodel}
  (setting $z_{(F,P)}=0$ for all $F\not=0$). Hence, $Z^* \leq \LP(DW)$ and $Z^*
  \leq \LP(PDW)$. We prove the converse by optimality-preserving exchange
  arguments. First, given a solution with rectangles that contain rings, these
  rings can gradually be replaced by circles and circular patterns. Second,
  given a solution with rectangles that contain circles, we are guaranteed to
  find a circular pattern that can take its place. Formally, the proof reads as
  follows.

  1. Suppose $\LP(PDW) > Z^*$ and let $z^*$ be an optimal solution to
  \eqref{eq:mdwmodel} such that the total number of rings used in the rectangles
  of the support,
  \begin{equation*}
    \mu(z) = \sum_{ (F,P) \in\mrp } \sum_{t=1}^\ntypes F_t \cdot z_{(F,P)}
  \end{equation*}
  is minimal.  By assumption, $\mu(z^*) > 0$, and we can choose $(\tilde F, \tilde
  P)$ and $\tilde t$ such that $z^*_{(\tilde F,\tilde P)} >0$ and $\tilde
  F_{\tilde t} \geq 1$.  Choose ring type $\tilde t$ to have smallest external radius $\rext_{\tilde t}$. Then this ring can only contain circles, and with these circles it forms a circular pattern $\tilde C = (\tilde t, P') \in \cp$. Replacing the ring of type $\tilde t$ by a circle with same external radius
  gives the packable tuple $(\hat F, \hat P) := (\tilde F - e_{\tilde t}, \tilde
  P + e_{\tilde t})$.  Then
  define a new solution $\hat z$ via
  \begin{align*}
    \hat z_{(F,P)} &:= \begin{cases}
    0 & \text{ if } (F,P) = (\tilde F, \tilde P),\\
    z^*_{(\hat F,\hat P)} + z^*_{(\tilde F,\tilde P)} & \text{ if } (F,P) = (\hat F, \hat P),\\
    z^*_{(F,P)} & \text{ otherwise},
    \end{cases}
    \intertext{for all $(F,P) \in \mrp$, and}
    \hat z_{C} &:= \begin{cases}
    z^*_{\tilde C} + z^*_{(\tilde F,\tilde P)} & \text{ if } C = \tilde C,\\
    z^*_{C} & \text{ otherwise},
    \end{cases}
  \end{align*}
  for all $C \in \cp$.  By construction, $\hat z$ is feasible for
  \eqref{eq:mdwmodel} and has identical objective function value.  However,
  $\mu(\hat z) = \mu(z^*) - z^*_{(\tilde F,\tilde P)} < \mu(z^*)$, contradicting
  the minimality assumption.

  2. Suppose $\LP(DW) > Z^*$ and let $z^*$ be an optimal solution to
  \eqref{eq:dwmodel} such that $\nu(z) = \sum_{C \in\cp} z_C$ is minimal.  By
  assumption, $\nu(z^*) > 0$; otherwise, $z^*$ could be transformed into an
  optimal solution of $\LP(DW)$ by removing all circles from each
  rectangle. Hence, we can choose a circular pattern $\tilde C = (\tilde t, P')$
  with $z^*_{\tilde C} > 0$ and largest external radius $\rext_{\tilde t}$.

  Now consider Constraint \eqref{eq:mdwmodel:recursive} for $t=\tilde
  t$. Because $\sum_{C \in \cp} P_{\tilde t} \cdot z_C$ must be zero, there
  exists an $(\tilde F,\tilde P)$ such that $z^*_{(\tilde F,\tilde P)} > 0$ and
  $\tilde P_{\tilde t} \geq 1$.  Replacing one ring of type $\tilde t$ by the
  circular pattern $\tilde C = (\tilde t,P')$ gives the packable tuple $(\hat F,
  \hat P) := (\tilde F + e_{\tilde t}, \tilde P - e_{\tilde t} + P')$.  Finally,
  we define a new solution $\hat z$ via
  \begin{align*}
    \hat z_{(F,P)} &:= \begin{cases}
    0 & \text{ if } (F,P) = (\tilde F, \tilde P),\\
    z^*_{(\hat F,\hat P)} + z^*_{(\tilde F,\tilde P)} & \text{ if } (F,P) = (\hat F, \hat P),\\
    z^*_{(F,P)} & \text{ otherwise},
    \end{cases}
    \intertext{for all $(F,P) \in \mrp$, and}
    \hat z_{C} &:= \begin{cases}
    0 & \text{ if } C = \tilde C,\\
    z^*_{C} & \text{ otherwise},
    \end{cases}
  \end{align*}
  for all $C \in \cp$.  By construction, $\hat z$ is feasible for
  \eqref{eq:mdwmodel} and has identical objective function value.  However,
  $\nu(\hat z) = \nu(z^*) - z^*_{\tilde C} < \nu(z^*)$, contradicting the
  minimality assumption.
  \qed
\end{proof}

While \eqref{eq:pdwmodel} does not improve
the strength of the \LP relaxation, compared to \eqref{eq:dwmodel}, the
advantage of~\eqref{eq:pdwmodel} is that it breaks the symmetry of the
combinatorial part of the \rcpp, i.e., packing rings into rings, on each
recursion level. Applying the pattern-based Dantzig-Wolfe reformulation makes
deciding how to pack rings a counting problem in the master problem.  Compared
to~\eqref{prob:pricing1}, the resulting sub-problems do not contain the
recursive structure of \rcpp any more.  This balances the complexity between the
master problem and sub-problems, which is crucial for the performance of a
column generation algorithm.

\section{Column Generation Method for Solving the \rcpp}
\label{section:cgmethod}

This section presents a column generation based method for solving
formulation~\eqref{eq:pdwmodel}.  The key ingredients of our method are the
enumeration of valid circular patterns, the generation of rectangular patterns,
and a dynamic verification for circular pattern candidates during the column
generation algorithm.
Each of these fundamental components of the column generation based method are
necessary for addressing the complexity of solving the pricing
problem~\eqref{eq:pdwmodel:pricing} and the difficulty in verifying whether a
circular pattern candidate is packable. Since it may not be possible to compute
all possible circular and rectangular patterns, the dynamic verification
provides the capability to only check patterns that may be part of an optimal
solution.
Additionally, a key feature of our algorithm is that it is always capable of
computing valid primal and dual bounds even though not all rectangular and
circular patterns have been found.

\subsection{Enumeration of Circular Patterns}
\label{section:enumeration}

Formulation~\eqref{eq:pdwmodel} contains one variable for each circular pattern
in $\cp$. This set is, in general, of exponential size. We present a column
enumeration algorithm to compute all relevant circular patterns
for~\eqref{eq:pdwmodel}. The main step of the algorithm is to verify whether a
given tuple $(t,P)\in\typeset\times\Z_{+}^\ntypes$ is in the set $\cp$ or not. A
tuple can be checked by solving the following nonlinear nonconvex verification
problem:
\begin{subequations}\label{eq:verifynlp}
\begin{align}
  \norm{\mvec{x_i}{y_i} - \mvec{x_j}{y_j}}_2 \ge \rext_i + \rext_j & \fa i,j \in C: i < j \label{eq:verifynlp:cons1} \\
  \norm{\mvec{x_i}{y_i}}_2 \le \rint_t - \rext_i & \fa i \in C \label{eq:verifynlp:cons2} \\
  x_i, y_i \in \R & \fa i \in C \label{eq:verifynlp:cons3}
\end{align}
\end{subequations}
Here $C := \{1, \ldots, \sum_{i}P_i\}$ is the index set of individual circles,
and $\rext_i$ the corresponding external radius of a circle $i \in
C$. Model~\eqref{eq:verifynlp} checks whether all circles can be placed in a
non-overlapping way into a ring of
type~$t\in\typeset$. Constraint~\eqref{eq:verifynlp:cons1} ensures that no two
circles overlap, and~\eqref{eq:verifynlp:cons2} guarantees that all circles are
placed inside a ring of type $t$.

The non-overlapping conditions~\eqref{eq:verifynlp:cons1} are nonconvex and
make~\eqref{eq:verifynlp} computationally difficult to solve. Due to the
positioning of circles,~\eqref{eq:verifynlp} contains a lot of symmetry. The
rotation of every solution by 180\degree{} leads to another equivalent solution.
Typically, this kind of symmetry is difficult to address within a global \NLP
solver. Branching on some continuous variables is likely to have no impact on
the dual bound. One way to overcome this problem is to break some symmetry of
the problem by ordering circles of the same type in the $x$-coordinate
non-decreasingly. We achieve this by adding
\begin{equation}\label{eq:orderingx}
  x_i \le x_j \fa i < j: \rext_i = \rext_j
\end{equation}
to~\eqref{eq:verifynlp}. Additionally, we add an auxiliary objective function
$\min_{i \in C} x_i$ to~\eqref{eq:verifynlp}. From our computational experiments
we have seen that adding~\eqref{eq:orderingx} to~\eqref{eq:verifynlp} makes it
easier for the \NLP solver to prove infeasibility.

As already seen in Section~\ref{section:circularpatterns}, there is a dominance
relation between circular patterns, meaning that in any optimal solution of
\rcpp, dominated patterns can be replaced by non-dominated
ones. Definition~\ref{def:dominance} formalizes this notion of a dominance
relation between circular patterns.
\begin{definition}\normalfont\label{def:dominance}
  A circular pattern $(t,P)\in \cp$ \emph{dominates} $(t,P') \in \cp$ if and
  only if $P' <_{lex} P$, where $<_{lex}$ denotes the standard lexicographical
  order of vectors.
\end{definition}
Let
\begin{equation*}
  \cpdom := \left\{ (t,P) \in \cp \mid \nexists (t,P') \in \cp: (t,P') \text{ dominates } (t,P) \right\}
\end{equation*}
be the set of \emph{non-dominated circular patterns}. The set $\cpdom$ might be
much smaller than $\cp$, but is, in general, still of exponential size. Using
$\cpdom$ in~\eqref{eq:pdwmodel}, instead of the larger set $\cp$, results in
fewer variables.

Finally, we present the procedure \emph{EnumeratePatterns} in order to compute
$\cpdom$. Algorithm~\ref{alg:enum} considers all possible
$(t,P) \in \typeset\times\Z_{+}^\ntypes$ and checks whether $(t,P)$ is a
circular pattern by solving~\eqref{eq:verifynlp}. The algorithm exploits the
dominance relation between circular patterns to reduce the number of \NLP solves
and filter dominated patterns.
In the following, we discuss the different steps of Algorithm~\ref{alg:enum} in
more detail. For simplicity, we define the set $[x] := \{0,\ldots,x\}$ for an
integer number $x \in \Z_+$, and call a candidate
pattern~$(t,P) \in \typeset \times \Z_{+}^\ntypes$ \emph{infeasible} if
$(t,P) \notin \cp$ and feasible otherwise.
\begin{algorithm}[h]
  \caption{EnumeratePatterns}
  \Input{internal and external radii $\rint$ and $\rext$, demands $\demand$}
  \Output{$\cpfeas \subseteq \cp$, unverified candidates $\cpunknown$}
  \label{alg:enum}
  $\cpfeas := \emptyset$, $\cpinfeas := \emptyset$, $\cpunknown := \emptyset$ \\
  \For{$t \in \typeset$}
  {
    \For{ $P \in [\demand_1] \times \ldots \times [\demand_\ntypes]$ \label{alg:enum:select} }
    {
      \If{ $\exists (P',t) \in \cpinfeas : P' <_{lex} P$ or $\exists (P',t) \in \cpfeas : P' >_{lex} P$ }
      {
        \textbf{continue} \label{alg:enum:dom}
      }
      $status :=$ solve verification \NLP~\eqref{eq:verifynlp} \label{alg:enum:solve}\\
      \If{ $status = $ "feasible" }
      {
        $\cpfeas := \cpfeas \cup {(P,t)}$
      }
      \If{ $status = $ "infeasible" }
      {
        $\cpinfeas := \cpinfeas \cup {(P,t)}$
      }
      \If{ $status = $ "time limit" or "memory limit" }
      {
        $\cpunknown := \cpunknown \cup {(P,t)}$
      }
    }
  }
  filter all dominated patterns from $\cpfeas$ and $\cpunknown$ \label{alg:enum:filter} \\
  \Return $(\cpfeas,\cpunknown)$
\end{algorithm}

The algorithm maintains three sets~$\cpfeas$, $\cpinfeas$, $\cpunknown$,
initialized to the empty set.
In Line~\ref{alg:enum:select}, we iterate through all possible pattern
candidates $(t,P)$ for a fixed $t \in \typeset$.
In Line~\ref{alg:enum:dom}, we check whether $P$ dominates a circular
pattern already verified as infeasible and whether $P$ is
dominated by a circular pattern already verified as feasible. In both cases, $P$ can be skipped.
Otherwise, in Line~\ref{alg:enum:solve}, we solve a nonconvex verification
\NLP~\eqref{eq:verifynlp}, which is the bottleneck of Algorithm~\ref{alg:enum}.
Roughly speaking, this \NLP is easy to solve for the case where $P$, selected in
Line~\ref{alg:enum:select}, is component-wise too small or too large. In the
first case, finding a feasible solution is easy due to a small number of
circles. In the other case, we can conclude that the circles of the candidate
$P$ cannot be packed due to limited volume of the surrounding ring. The
computationally expensive verification \NLPs lie between these two extreme
cases.
Because of the two reversed dominance checks, it is in general unclear in which order the~$P$ in
Line~\ref{alg:enum:select} should be enumerated. On the one hand, it can be beneficial to start with
candidates that contain many circles. In case of a feasible candidate, many
other candidates can be discarded because of the dominance relation between
circular patterns. On the other hand, an infeasible candidate that dominates
another infeasible candidate can incur a redundant, difficult
\NLP~\eqref{eq:verifynlp} solve.

Algorithm~\ref{alg:enum} returns two sets of circular patterns. The first set
contains all feasible circular patterns that could be successfully verified and
is denoted by $\cpfeas$. The second set, $\cpunknown$, contains all candidates
that could not be verified because of working limits, e.g., a time or memory
limit on the solve of~\eqref{eq:verifynlp}. At the end, each non-dominated circular pattern
is either in the set $\cpfeas$ or $\cpunknown$, which means that
\begin{align*}
  \cpdom \subseteq \cpfeas \cup \cpunknown
\end{align*}
holds.

Ideally, we have verified all candidates, i.e., $\cpunknown = \emptyset$. However, since
there are exponentially many candidates to check and each candidate requires to
solve~\eqref{eq:verifynlp}, it might not be possible to compute $\cpdom$
in a reasonable amount of time.

Nevertheless, even if the set $\cpunknown$ is non-empty, we are able to compute valid
dual and primal bounds for \rcpp.
A valid primal bound is given when solving~\eqref{eq:pdwmodel} after restricting
the set of circular patterns to $\cpfeas$. This is becasuse this restriction
ensures that we only use packable patterns.
Using $\cpfeas \cup \cpunknown$ in~\eqref{eq:pdwmodel} yields a valid relaxation
of \rcpp because we use at least all patterns in $\cpdom$ and maybe some
circular patterns that are not packable.

\subsection{Computation of Valid Dual Bounds}
\label{section:validbounds}

The Pricing Problem~\eqref{eq:pdwmodel:pricing} is a classical \cpp---an
$\mathcal{NP}$-hard nonlinear optimization problem, which is in practice very
hard to solve. State-of-the-art \MINLP solvers might fail to prove global
optimality for this type of problems. Nevertheless, the following theorem shows
how to use a dual bound for~\eqref{eq:pdwmodel:pricing} to compute a valid dual
bound for \rcpp.

\begin{theorem}[Farley~\cite{Farley1990}, Vance et al.~\cite{Vance1994}]\label{theorem:farley}\normalfont
  Let $\nu_{RMP}$ be the optimum of the restricted master problem of
  $\PDW(\rp')$, $\nu_{Pricing}$ be the optimal value for the Pricing
  Problem~\eqref{eq:pdwmodel:pricing}, and $OPT$ be the optimal solution value
  of \rcpp. Then the inequality
  \begin{align*}
    \ceil{\frac{\nu_{RMP}}{1 - z_{Pricing}}} \le OPT
  \end{align*}
  holds for all valid dual bounds $z_{Pricing} \le \nu_{Pricing}$.
\end{theorem}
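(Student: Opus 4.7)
The plan is to follow the classical Farley argument: scale the optimal dual solution of the restricted master problem so that it becomes feasible for the full LP dual of $\PDW(\rp)$, and then invoke weak LP duality together with the integrality of $OPT$.

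First I would write down the LP dual of the restricted master problem of $\PDW(\rp')$. Using dual multipliers $\mu_t \ge 0$ for the demand constraints~\eqref{eq:pdwmodel:demand} and $\lambda_t \ge 0$ for the recursive constraints~\eqref{eq:pdwmodel:recursive}, the dual has objective $\sum_t \demand_t \mu_t$, one constraint $\sum_t P_t \lambda_t \le 1$ per rectangular pattern $P \in \rp'$, and one constraint $\mu_t - \lambda_t + \sum_{t'} P_{t'} \lambda_{t'} \le 0$ per circular pattern $(t,P) \in \cp$. Let $(\mu^*, \lambda^*)$ be an optimal dual solution of the restricted master problem, so that $\nu_{RMP} = \sum_t \demand_t \mu^*_t$.

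Next I would define the scaling factor $\alpha := 1/(1 - z_{Pricing})$ and consider $(\hat \mu, \hat \lambda) := (\alpha \mu^*, \alpha \lambda^*)$. Since $z_{Pricing} \le \nu_{Pricing}$ and $\nu_{Pricing} = \min_{P \in \rp} \bigl(1 - \sum_t \lambda^*_t P_t\bigr)$ (the restricted dual constraints are already tight enough that $\rp$ and $\rp\setminus\rp'$ give the same minimum here), we have $\sum_t \lambda^*_t P_t \le 1 - z_{Pricing}$ for every $P \in \rp$, so
\begin{equation*}
\sum_t P_t \hat\lambda_t = \alpha \sum_t P_t \lambda^*_t \le \alpha (1 - z_{Pricing}) = 1,
\end{equation*}
which shows that the rectangular-pattern dual constraints hold for the full set $\rp$. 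The circular-pattern constraints are homogeneous in $(\mu, \lambda)$, so they remain satisfied after positive scaling; non-negativity is preserved as well. Hence $(\hat\mu, \hat\lambda)$ is feasible for the full LP dual of $\PDW(\rp)$.

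Finally, weak LP duality gives $\LP(\PDW(\rp)) \ge \sum_t \demand_t \hat\mu_t = \alpha \nu_{RMP} = \nu_{RMP}/(1 - z_{Pricing})$, and since the LP relaxation bounds the integer optimum from below, $\nu_{RMP}/(1 - z_{Pricing}) \le OPT$. Because $OPT \in \Z$, we may round up to obtain the stated inequality. The main obstacle I anticipate is the case analysis on the sign of $1 - z_{Pricing}$: one needs $1 - z_{Pricing} > 0$ for the scaling to be well-defined, which is ensured because any valid lower bound satisfies $z_{Pricing} \le \nu_{Pricing} \le 1$ (the zero pattern $P = 0$ yields reduced cost $1$). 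A secondary point to verify carefully is that scaling preserves feasibility of the circular-pattern constraints, but this is immediate from homogeneity.
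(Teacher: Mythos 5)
The paper does not actually prove this theorem---it is quoted from Farley and Vance et al.\ with citations---so there is no in-paper argument to compare against. Your proof is the standard one for this result: dualize the restricted master, scale its optimal dual solution $(\mu^*,\lambda^*)$ by $\alpha = 1/(1-z_{Pricing})$ so that it becomes feasible for the dual of the full \LP relaxation of $\PDW(\rp)$, and finish with weak duality and integrality of $OPT$. Your identification of the dual constraints, the homogeneity argument for the circular-pattern constraints, and the final rounding step are all correct.

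The one step that needs repair is the claim that $\sum_t \lambda^*_t P_t \le 1 - z_{Pricing}$ holds for \emph{every} $P \in \rp$. The Pricing Problem~\eqref{eq:pdwmodel:pricing} is a minimum over $\rp \setminus \rp'$ only, so this bound is guaranteed only for unpriced patterns; for $P \in \rp'$ dual feasibility of the restricted master gives merely $\sum_t \lambda^*_t P_t \le 1$, and for basic columns this is tight. Your parenthetical assertion that $\rp$ and $\rp\setminus\rp'$ ``give the same minimum'' is therefore not justified, and the extension to all of $\rp$ goes through exactly when $z_{Pricing} \le 0$, i.e., when $\alpha \le 1$. For $0 < z_{Pricing} \le \nu_{Pricing}$ the scaled point violates the constraint of a tight column in $\rp'$ and the claimed inequality can genuinely fail: with one ring type, $\rint_1 = 0$, two rings per rectangle, $\demand_1 = 10$ and $\rp'$ containing only the two-ring pattern, one gets $\nu_{RMP} = 5$, $\nu_{Pricing} = 1/2$, and $\ceil{5/(1-1/2)} = 10 > OPT = 5$. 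So you should either assume $z_{Pricing} \le 0$ (the only regime in which the bound is actually needed, since $\nu_{Pricing} > 0$ already certifies that $\nu_{RMP}$ equals the full \LP value) or replace $z_{Pricing}$ by $\min\{0, z_{Pricing}\}$; with that amendment the argument is complete. Your well-definedness discussion via the zero pattern is a minor point by comparison (that pattern may already lie in $\rp'$) and becomes moot once $z_{Pricing} \le 0$ is imposed.
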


In our computational experiments we have seen that this bound can
indeed be used to obtain good quality bounds in cases where the circle packing
pricing problem~\eqref{eq:pdwmodel:pricingformulation} could not be solved to
optimality. Note that the dual bounds of Theorem~\ref{theorem:farley} depend on
the quality of the dual bounds of pricing problems. Any improvement on the dual
bound for the \cpp automatically translates to better dual bounds for the \rcpp.

\subsection{Price-and-Verify Algorithm}
\label{section:price-and-verify}

The price-and-verify algorithm is a column generation based algorithm that
incorporates ideas from Sections~\ref{section:pdw}, \ref{section:enumeration},
and~\ref{section:validbounds}. This is summarized in
Algorithm~\ref{alg:priceandverify} and consists of three main steps:
\begin{enumerate}
  \item An initial enumeration of circular patterns.
  \item Generation of rectangular patterns with negative reduced cost.
  \item Verification of circular pattern candidates during the pricing loop.
\end{enumerate}
First, in Line~\ref{alg:priceandverify:enumerate}, we use
Algorithm~\ref{alg:enum} to compute the set of non-dominated circular patterns
$\cpdom$. Its computational cost depends on the number of different ring types
$\ntypes$, the external radii $\rext$, and the internal radii $\rint$.
Algorithm~\ref{alg:enum} returns two sets $\cpfeas$ and $\cpunknown$ with the
properties
\begin{align*}
  \cpfeas & \subseteq \cp \text{ and} \\ \cpdom & \subseteq \cpfeas \cup
  \cpunknown.
\end{align*}

A feature of Algorithm~\ref{alg:priceandverify} is the ability to compute valid
dual and primal bounds simultaneously. This is achieved by computing the primal
and dual bounds while dynamically verifying circular pattern candidates in
$\cpunknown$ during the solving process. The simultaneous computation of bounds
is an improvement over the methods previously discussed. In the previous
section, the proposed methods separately compute valid primal and dual bound for
\rcpp by using $\cpfeas$ and $\cpfeas \cup \cpunknown$ respectively.

Algorithm~\ref{alg:priceandverify} uses $\cpfeas \cup \cpunknown$ as the initial
set of circular patterns in Formulation~\eqref{eq:pdwmodel}. This ensures
that at least all packable circular patterns are considered, which is necessary
for proving a valid dual bound for \rcpp. In general, many pattern candidates in
$\cpunknown$ are not packable and need to be discarded to prove global
optimality.  Algorithm~\ref{alg:priceandverify} dynamically verifies $C \in
\cpunknown$ and fixes the corresponding variable $z_C$ to zero if $C$ is not
packable. By discarding non-packable pattern candidates, the verification step
does not only improve the quality of the dual bound but also ensures that any
integer feasible solution $\bar z$ is feasible for the \rcpp, i.e., $z_C = 0$
for all $C \in \cpunknown$.

The key idea of the dynamic verification is to only consider candidate patterns
in $\cpunknown$ that have a nonzero \LP solution value. To be more precise, let
$z^*$ be the optimal \LP solution of the restricted master problem after no more
rectangular patterns with negative reduced cost could be found in the pricing
loop, see
Line~\ref{alg:priceandverify:pricingloop}. Algorithm~\ref{alg:priceandverify}
solves the \LP relaxation of the master problem~\eqref{eq:pdwmodel} to
optimality if $z^*_C = 0$ holds for all $C \in \cpunknown$. Otherwise, there
exists at least one $C \in \cpunknown$ with $z_C^* > 0$. In order to verify $C$,
we solve~\eqref{eq:verifynlp} in Line~\ref{alg:priceandverify:checkpattern} with
larger working limits than we have used in the initial enumeration step. There
are three possible outcomes. The candidate pattern $C$
\begin{itemize}
\item \textbf{is packable:} We remove $C$ from $\cpunknown$, add it to
  $\cpfeas$, and continue with the next pattern candidate $C \in \cpunknown$
    that has a nonzero solution value in $z^*$.

\item \textbf{is not packable:} We remove $C$ from $\cpunknown$ and fix $z_C$ to
  zero, which cuts off the \LP solution $z^*$. Resolving the \LP leads to a
  different dual solution that might allow us to find new rectangular patterns
    with negative reduced cost. In this case, Algorithm~\ref{alg:priceandverify}
    goes back to Line~\ref{alg:priceandverify:pricingloop} and continues with
    pricing.

\item \textbf{could not be verified:} Due to working limits, it may not be
  possible to verify $C$. In this case, we label that $C$ has been tested, i.e.,
    $\Psi(C)=1$, and continue with the next candidate $C' \in \cpunknown$ that
    has not been labeled yet, i.e., $\Psi(C')= 0$, and $z_{C'}^* >0 $. If $C'$
    can be verified to be not packable, we might get a different \LP solution
    with $z_C^* = 0$. This would allow us to solve the \LP relaxation of the
    master problem to optimality even though we could not verify $C$.

  In Line~\ref{alg:priceandverify:invalidcheck} we check whether there is still
    a candidate $C' \in \cpunknown$ with $z_{C'}^* > 0$ left. The candidates
    where $z_{C'}^* > 0$ have already been tested, so $\Psi(C') = 1$, but were
    unable to be verified within the working limits.  We fix all variables
    $z_{C'}$ to zero for all $C' \in \cpunknown$.  Since unverified patterns
    have been fixed to zero, it is no longer possible to compute a valid dual
    bound for \rcpp.
  The remaining solution process can be seen as solving a restricted version of
    \rcpp whose solution $\bar z$ is feasible for the original problem. However,
    it might be the case that $\bar z$ is still optimal, even though
    unverified candidates from $\cpunknown$ are fixed to zero. It is possible to
    verify the circular patterns a posteriori to determine whether an optimal
    solution has been found.
\end{itemize}

The advantage of dynamically verifying circular pattern candidates during the
pricing loop is that small working limits can be used in
Algorithm~\ref{alg:enum} in order to identify packable patterns quickly and then
focus, with larger working limits, on the patterns that are used in the \LP
solution of the restricted master problem.

After applying Algorithm~\ref{alg:priceandverify} we have solved the \LP
relaxation of~\eqref{eq:pdwmodel} in the root node of the branch-and-bound tree.
If there exist any integer variables with a fractional solution value, then
branching must be performed.  Due to the complexity of the pricing problems, we
observed in our experiments that solving~\eqref{eq:pdwmodel:pricing} to global
optimality is only possible for simple problems that can be solved in the root
node---without requiring branching. The proposed branching strategy does not
greatly reduce the complexity of the pricing problems at each node of the tree.
As a result, performing pricing in each node is very time consuming. Thus, to
improve the computational performance, pricing is only performed in the root
node. Afterwards the \rcpp is solved for the set of rectangular patterns that
have been found so far. The optimal solution of this restricted problem is
feasible for the original \rcpp.  Our results show that applying this strategy
allows us to find good quality solutions for difficult problems.

\begin{algorithm}[t]
  \DontPrintSemicolon
  \caption{Price-and-verify}
  \label{alg:priceandverify}
  \Input{internal and external radii $\rint$ and $\rext$, demands $\demand$}
  \Output{\LP solution $z^*$ of the master problem or $\emptyset$}
  $(\cpfeas,\cpunknown) := $ EnumeratePatterns($\rint, \rext, \demand$) \label{alg:priceandverify:enumerate} \\
  $\Psi_C := 0$ for all $C \in \cpunknown$ \\
  \While{ $\exists R \in \rp:$ $\redcosts_R < 0$ \label{alg:priceandverify:pricingloop} }
  {
    $\rp := \rp \cup \{R\}$ \tcp*{pricing loop}
  }
  $z^* := $ solve $\LP(RMP)$ \label{alg:priceandverify:solvelp} \\
  \While{ $\exists C \in \cpunknown: z_C^* > 0 \land \Psi_C = 0$ \label{alg:priceandverify:checkpattern} }
  {
    $status :=$ solve verification \NLP~\eqref{eq:verifynlp} \label{alg:priceandverify:verify} \tcp*{verification step}
    $\Psi_C := 1$ \\
    \If{ $status = $ "feasible'' }
    {
      $\cpunknown := \cpunknown \backslash \{C\}$ \\
      $\cpfeas := \cpfeas \cup \{C\}$ \\
    }
    \If{ $status = $ "infeasible''}
    {
      $\cpunknown := \cpunknown \backslash \{C\}$ \\
      fix $z_C := 0$ \tcp*{fixing cuts of $z^*$}
      \goto{} \ref{alg:priceandverify:pricingloop} \tcp*{enter pricing loop again}
    }
  }
  \If{ $\exists C \in \cpunknown: z_C^* > 0$ \label{alg:priceandverify:invalidcheck} }
  {
    fix $z_C := 0$ for all $C \in \cpunknown$ \\
    \Return $\emptyset$ \tcp*{\LP solution is not valid}
  }
  \Return $z^*$
\end{algorithm}
 
\section{Computational Experiments}\label{section:experiments}

In this section, we investigate the performance and the quality of the dual and
primal bounds obtained by our method and analyze how they
relate to specific properties of an instance. The algorithm presented in
Section~\ref{subsection:improvedreformulation} is implemented in the \MINLP
solver \scip~\cite{SCIP}. We refer to~\cite{Achterberg2007a,Vigerske2013} for an
overview of the general solving algorithm and \MINLP features of \scip.

\subsection{Implementation}

We extended \scip with the addition of two plug-ins: one pricing plug-in for
solving the LP relaxation of Formulation~\eqref{eq:pdwmodel} and one constraint
handler plug-in to apply the dynamic verification of circular patterns during
Algorithm~\ref{alg:priceandverify}.
Algorithm~\ref{alg:enum} is executed immediately before the solving process for
the \rcpp commences.
To accelerate the verification of circular pattern candidates, we use a
simple greedy heuristic to check whether a given candidate $(t,P)$ is a circular
pattern, i.e., if $(t,P) \in \cp$. The heuristic iteratively packs circles to
the left-most, and then lowest possible position in a ring of type
$t\in\typeset$. If the heuristic fails to verify a candidate, we
solve~\eqref{eq:verifynlp} until a feasible solution has been found, or it has
been proven to be infeasible.
The same heuristic is used for finding a rectangular pattern with negative
reduced cost during Algorithm~\ref{alg:priceandverify}. If the heuristic fails
to find such a pattern, we directly solve~\eqref{eq:pdwmodel:pricing} to global
optimality.
The implementation is publically available in source code as part of the SCIP
Optimization Suite and can be downloaded at \url{https://scip.zib.de/}.

\subsection{Experimental Setup}

\newcommand{\realsetninstances}{9}
\newcommand{\randsetninstances}{800}

We conducted three main experiments. In the first experiment we characterize
instances for which Algorithm~\ref{alg:enum} finds all elements of the set of
non-dominated circular patterns $\cpdom \subseteq \cpfeas$. The second
experiment answers the question whether our proposed method is able to solve
instances to global optimality and characterizes these instances by their
structural properties. Because our method can also be used as a primal
heuristic, in the last experiment we compare it with the \grasp heuristic of
Pedroso et al.~\cite{Pedroso2016}.

In the enumeration experiment we apply Algorithm~\ref{alg:enum} on each instance
and check whether $\cpdom$ could be computed in two hours. For very difficult
problems it might happen that we spend the whole time limit in solving a single
\NLP.

For our second experiment, we use our method to compute valid dual and primal
bounds for \rcpp with a total time limit of two hours. In contrast to the first
experiment, we enforce a time limit of $10s$ for each \NLP~\eqref{eq:verifynlp}
in Algorithm~\ref{alg:enum}. After a time budget of $1200s$, we stop
solving~\eqref{eq:verifynlp} and only use the greedy heuristic to verify
circular pattern candidates. A pattern is added to $\cpfeas$ if it can be
verified. Otherwise, we add a candidate to $\cpunknown$ and process with the
next candidate pattern. During the pricing loop we then use a larger time limit
of $120s$ to verify a pattern in $\cpunknown$ that has a nonzero value in the LP
relaxation solution.  Again, we stop solving \NLPs after $2400s$ were spent on
verifying pattern candidates.

During Algorithm~\ref{alg:priceandverify}, we use a time limit of
$300s$ to solve~\eqref{eq:pdwmodel:pricing}. If we fail to solve a pricing
problem to optimality and no improving column could be found, we stop solving
any further pricing problems, obtain a valid dual bound by applying
Theorem~\ref{theorem:farley}, and continue to solve the restricted master
problem for the current set of rectangular patterns. In our experiments, this
only occurs at the root node.

Finally, in our third experiment we compare the obtained primal bounds of our
method with those obtained from the \grasp heuristic. Both algorithms run with a
time limit of three hours on each instance. For this experiment the
specifications from the second experiment are used for out method.
We use the Python implementation of \grasp from~\cite{Pedroso2016}. Note that
\scip is written in the programming language C, in which an implementation of
\grasp would be much faster. However, in our primal bound experiment we only
compare the quality of obtained solutions and it can be observed that \grasp
finds its best solutions in the first few minutes.

\paragraph{Test Sets.}

We consider two different test sets for our experiments. The first one contains
\realsetninstances{} real-world instances from the tube industry, which were
used in~\cite{Pedroso2016}, and is in the following called \realset test set.
Because this test set is too limited for a detailed computational study, we
created a second test set containing \randsetninstances{} instances, the
\randset test set. The purpose of this set is to show how the number of
different ring types $\ntypes$, the maximum ratio of external radii
${\max_t \rint_t} / {\min_t \rext_t}$, and the ratio between rectangle size and
the maximum volume of a ring
${\max\{\width,\height\}} / {\max_t \pi (\rext_t)^2}$ influence the performance
of our method.

The name of each instance reads
\texttt{i<$\ntypes$>\_<$\alpha$>\_<$\beta$>\_<$\gamma$>.rpa} where
\begin{itemize}
\item $\ntypes \in \{3,4,5,10\}$ is the number of ring types,
\item $\frac{\max_t \rint_t}{\min_t \rext_t} = :\alpha \in \{2.0,2.3,\ldots,4.7\}$
  is the maximum external radii ratio,
\item $\frac{\max\{\width,\height\}}{\max_t \rext_t} =: \beta \in
  \{2.0,2.3,\ldots,4.7\}$ is the rectangle size to external radius ratio, and
\item $\frac{\width \height}{ \max_t \pi (\rext_t)^2} \left[0.8 \gamma, 1.2 \gamma
  \right]$ for $\gamma \in \{5,10\}$ is the demand interval for each type $t$.
\end{itemize}
The demand of a type $t \in \typeset$ is randomly chosen from the corresponding
demand interval. The size of this interval is anti-proportional to the external
radius, i.e., types with large external radius appear less often. For all
instances in \randset we fixed the width $\width$ and height $\height$ of the
rectangles to~10. We created one instance for each 4-tuple, giving 800~instances in total.
All instances of the \randset and \realset test set are publically available at
  \url{https://github.com/mueldgog/RecursiveCirclePacking}.

\paragraph{Hardware and Software.}

The experiments were performed on a cluster of 64bit Intel(R) Xeon(R) CPU
E5-2660 v3 2.6\,GHz with 12\,MB cache and 48\,GB main memory.
In order to safeguard against a potential mutual slowdown of parallel processes,
we ran only one job per node at a time.
We used \scip version~5.0 with \cplex~12.7.1.0 as LP~solver~\cite{Cplex},
\cppad~20140000.1~\cite{CppAD}, and \ipopt~3.12.5 with
\mumps~4.10.0~\cite{Mumps} as \NLP solver~\cite{WachterBiegler2006,Ipopt}.

\subsection{Computational Results}

In the following, we discuss the results for the three described experiments in
detail. Instance-wise results of all experiments can be found in
Table in the electronic supplement.

\paragraph{Enumeration Experiments.}

Figure~\ref{fig:enumplot} shows the computing time required to compute all
non-dominated circular patterns $\cpdom$ for the instances of the \randset test
set. Each line corresponds to the subset of instances with identical number of
ring types $\ntypes$. Each point on a line is computed as the shifted
geometric mean (with a shift of 1.0) over all instances that have the same value
${\max_t \rint_t}/{\min_t \rext_t}$. We expect that the number of circular
patterns increases when increasing the ratio between the ring with largest inner
and the ring with smallest outer radius.

The first observation is that the time to compute all circular patterns
increases when $\ntypes$ increases. For example, for $\ntypes = 10$ we need
about $4-10$ times longer to enumerate all patterns than for $\ntypes = 5$.
Also, all lines in Figure~\ref{fig:enumplot} approximately increase
exponentially in ${\max_t \rint_t}/{\min_t \rext_t}$. A larger ratio
implies that each verification \NLPs~\eqref{eq:verifynlp} has a larger number of
circles that fit into a ring of inner radius $\max_t \rint_t$. These difficult
\NLPs and the larger number of possible circular patterns provide an
explanation for the exponential increase of each line in
Figure~\ref{fig:enumplot}.

\begin{figure}[ht]
  \centering
  \begin{tikzpicture}
    \begin{axis}[
      xlabel = $\frac{\max_t \rint_t}{\min_t \rext_t}$,
      ylabel = time in seconds,
      xmin   = 2.0,
      xmax   = 3.5,
      ymax   = 7200,
      ymode  = log,
      legend cell align=left,
      legend pos=south east,
      legend style={draw=none}
      ]
      \addplot table {
2.000000 0.544380
2.300000 1.073888
2.600000 17.807337
2.900000 42.885077
3.200000 386.784900
3.500000 5099.140768
3.800000 7200.000000
4.100000 7200.000000
4.300000 7200.000000
4.400000 7200.000000
4.500000 7200.000000
4.700000 7200.000000
};
\addlegendentry{$\ntypes$ = 3};

\addplot table {
2.000000 0.773332
2.300000 1.381454
2.600000 13.151805
2.900000 27.903659
3.200000 794.094392
3.500000 5539.757300
3.800000 7200.000000
4.100000 7200.000000
4.300000 7200.000000
4.400000 7200.000000
4.500000 7200.000000
4.700000 7200.000000
};
\addlegendentry{$\ntypes$ = 4};

\addplot table {
2.000000 0.891406
2.300000 3.066368
2.600000 4.569154
2.900000 172.872413
3.200000 958.085502
3.500000 6556.494959
3.800000 7200.000000
4.100000 7200.000000
4.300000 7200.000000
4.400000 7200.000000
4.500000 7200.000000
4.700000 7200.000000
};
\addlegendentry{$\ntypes$ = 5};

\addplot table {
2.000000 3.289793
2.300000 9.119621
2.600000 114.896597
2.900000 240.303821
3.200000 3310.817057
3.500000 7200.000000
3.800000 7200.000000
4.100000 7200.000000
4.300000 7200.000000
4.400000 7200.000000
4.500000 7200.000000
4.700000 7200.000000
};
\addlegendentry{$\ntypes$ = 10};

     \end{axis}
  \end{tikzpicture}
  \caption{Plot showing the average time to enumerate all circular patterns for
    different number of ring types $\ntypes \in \{3,4,5,10\}$ within a two hours
    time limit.}
  \label{fig:enumplot}
\end{figure}
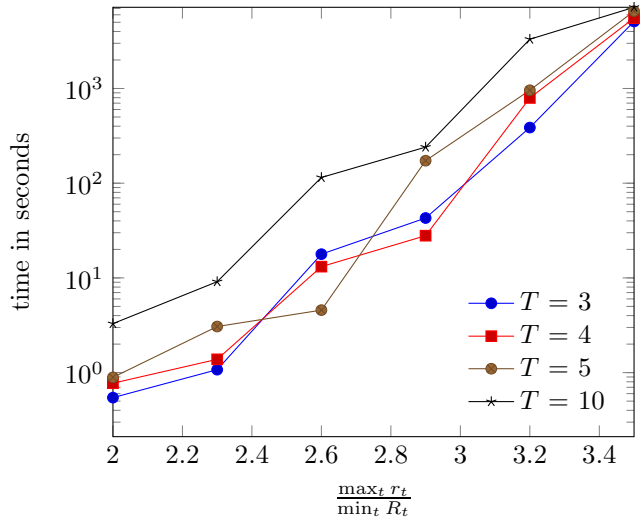

Table~\ref{res:enumtable} shows in more detail for how many instances we could
compute $\cpdom$ successfully and how much time it took on average. The rows
partition the instances according to the number of ring types and the columns
group these instances by their ${\max_t \rint_t} / {\min_t \rext_t}$
values. First, we see that for $392$ of the $800$ instances of the \randset test
set we could successfully enumerate all circular patterns, in $19.8$ seconds on
average. We see that for less and less instances the enumeration step is
successful as ${\max_t \rint_t} / {\min_t\rext_t}$ increases. For none of the
\randset instances with ${\max_t \rint_t} / {\min_t \rext_t}$
strictly larger than $3.5$ could $\cpdom$ be successfully
enumerated. Many verification \NLPs for these instances are too difficult to
solve and often consume the whole time limit of two hours, even for the case of
three different ring types.
However, for the 480~instances with ${\max_t \rint_t} / {\min_t \rext_t} \le 3.5$
Algorithm~\ref{alg:enum} managed to compute on $81.6$\% (392) of these instances all
non-dominated circular patterns.

\begin{table}[ht]
  \caption{Aggregated results for enumerating all non-dominated circular
    patterns. Each of the six columns reports the results for $80$ instances of the
      \randset test set. The $320$ instances with ${\max_t \rint_t} / {\min_t\rext_t} > 3.5$ are
      not shown because none of them could be successfully enumerated within the time limit.}
  \label{res:enumtable}
  \begin{tabular*}{\textwidth}{@{\extracolsep{\fill}}l|rr|rr|rr|rr|rr|rr}
    \tabledefline{n}{number of instances for which $\cpdom$ could be computed}
    \tabledefline{time}{time in seconds}
    \toprule
    $\nu = \frac{\max_t \rint_t}{\min_t \rext_t}$ & \multicolumn{2}{c|}{$\nu = 2.0$}
    & \multicolumn{2}{c|}{$\nu = 2.3$}
    & \multicolumn{2}{c|}{$\nu = 2.6$}
    & \multicolumn{2}{c|}{$\nu = 2.9$}
    & \multicolumn{2}{c|}{$\nu = 3.2$}
    & \multicolumn{2}{c}{$\nu = 3.5$}\\
    & n & time & n & time & n & time & n & time & n & time & n & time \\
    \midrule \midrule
    $\ntypes = 3$  & 20 & 0.5 & 20 & 1.0 & 20 & 8.1 & 20 & 35.2 & 17 & 227.5 & 12 & 4050.7 \\
$\ntypes = 4$  & 20 & 0.7 & 20 & 1.3 & 20 & 8.2 & 20 & 24.2 & 15 & 374.5 & 9 & 4020.3 \\
$\ntypes = 5$  & 20 & 0.9 & 20 & 2.6 & 19 & 4.3 & 19 & 117.9 & 16 & 571.4 & 4 & 4507.7 \\
$\ntypes = 10$ & 20 & 3.0 & 20 & 8.0 & 16 & 28.9 & 17 & 117.3 & 8 & 1022.6 & 0 & -- \\ \midrule
all            & 80 & 1.1 & 80 & 2.5 & 75 & 9.3 & 76 & 56.8 & 56 & 419.4 & 25 & 4109.4 \\
     \bottomrule
  \end{tabular*}
\end{table}

\paragraph{Exact Price-and-Verify.}

In our second experiment, we analyze the primal and dual bounds that have been
computed by our column generation method. Figure~\ref{fig:piechart} shows the
achieved optimality gaps, i.e., $(\text{primal} - \text{dual})/{\text{dual}},$
for all instances of the \randset test set. We solve $35.9$\% of the instances
to global optimality and get gaps between $0$-$25$\% for $5.6$\% of the
instances. For about $46.3$\% of the instances we achieve gaps between
$25$-$100\%$, and only for a single instance the gap is larger than $100$\%.

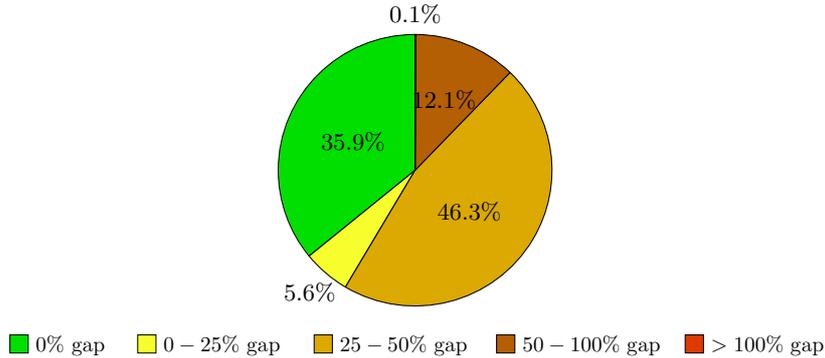
\begin{figure}[ht]

  \begin{center}
    \begin{tikzpicture}
      [
      pie chart,
      slice type={A}{c1},
      slice type={B}{c2},
      slice type={C}{c3},
      slice type={D}{c4},
      slice type={E}{c5},
      pie values/.style={font={\small}},
      scale=1.8
      ]
      \pie[values of E/.style={pos=1.15},values of B/.style={pos=1.18}]{}{35.9/A,5.6/B,46.3/C,12.1/D,0.1/E};
    \end{tikzpicture}

    \scalebox{0.80}{
      \begin{tikzpicture}
        \draw[fill=c1] (1,0) rectangle (1.3,0.3) node[right, yshift=-5]{$0\%$ gap};
        \draw[fill=c2] (3.1,0) rectangle (3.4,0.3) node[right, yshift=-5]{$0 - 25\%$ gap};
        \draw[fill=c3] (6,0) rectangle (6.3,0.3) node[right, yshift=-5]{$25 - 50\%$ gap};
        \draw[fill=c4] (9,0) rectangle (9.3,0.3) node[right, yshift=-5]{$50 - 100\%$ gap};
        \draw[fill=c5] (12.1,0) rectangle (12.4,0.3) node[right, yshift=-5]{$> 100\%$ gap};
      \end{tikzpicture}
    }
  \end{center}

  \caption{Optimality gaps for all \randset instances.}
  \label{fig:piechart}
\end{figure}

Table~\ref{res:gaptabletype} and Table~\ref{res:gaptablerratio} contain
aggregated results for the optimality gaps for the \randset test
set. Table~\ref{res:gaptabletype} shows that out of the~$287$ optimally solved
instances,~$80$ instances have three,~$72$ have four,~$70$ have five, and~$65$
have ten different ring types.
Most of the instances with a gap larger than~$50\%$ are from the subset of
instances with ten ring types. Only 33 of the~$98$ instances with a gap larger
than~$50\%$ have less than ten different ring types.
Figure~\ref{fig:bardiag} visualizes the results of Table~\ref{res:gaptabletype}
in a bar diagram. As expected, for an increasing number of ring types worse
optimality gaps are obtained.

\begin{table}[ht]
  \caption{Number of instances of the \randset test set according to final gap and number of ring types.}
  \label{res:gaptabletype}
  \begin{tabular*}{\textwidth}{@{\extracolsep{\fill}}lc||rrrrrr}
    \toprule
    & total & $0\%$ & $0-25\%$ & $25-50\%$ & $50-100\%$ & $>100\%$ \\
    \midrule
    \midrule
    $\ntypes= 3$      &        200 &         80 &         14 &         99 &          7 &          0 \\
$\ntypes= 4$      &        200 &         72 &         11 &        108 &          9 &          0 \\
$\ntypes= 5$      &        200 &         70 &          9 &        104 &         17 &          0 \\
$\ntypes= 10$     &        200 &         65 &         11 &         59 &         64 &          1 \\ \midrule
all               &        800 &        287 &         45 &        370 &         97 &          1 \\
     \bottomrule
  \end{tabular*}
\end{table}

\begin{figure}[ht]
  \begin{center}
    \scalebox{0.8}{
      \bardiagram{$|\mathcal{T}|$}{
  0/3/40.0/7.0/49.5/3.5/0.0,
  1/4/36.0/5.5/54.0/4.5/0.0,
  2/5/35.0/4.5/52.0/8.5/0.0,
  3/10/32.5/5.5/29.5/32.0/0.5}
     }
    \scalebox{0.80}{
      \begin{tikzpicture}
        \draw[fill=c1] (1,0) rectangle (1.3,0.3) node[right, yshift=-5]{$0\%$ gap};
        \draw[fill=c2] (3.1,0) rectangle (3.4,0.3) node[right, yshift=-5]{$0 - 25\%$ gap};
        \draw[fill=c3] (6,0) rectangle (6.3,0.3) node[right, yshift=-5]{$25 - 50\%$ gap};
        \draw[fill=c4] (9,0) rectangle (9.3,0.3) node[right, yshift=-5]{$50 - 100\%$ gap};
        \draw[fill=c5] (12.1,0) rectangle (12.4,0.3) node[right, yshift=-5]{$> 100\%$ gap};
      \end{tikzpicture}
    }
  \end{center}
  \caption{Optimality gaps for \randset instances split by number of different
    ring types.}
  \label{fig:bardiag}
\end{figure}
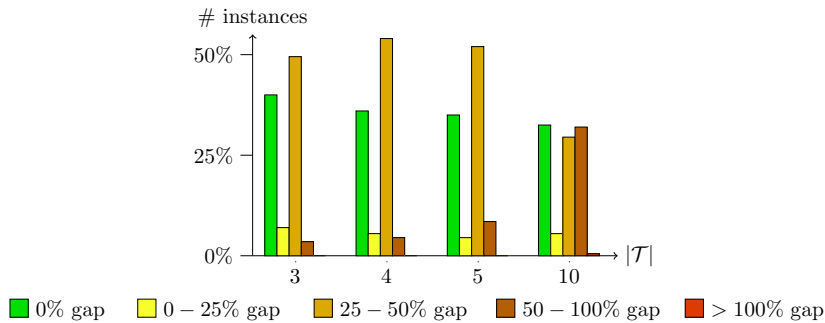

Each row in Table~\ref{res:gaptablerratio} corresponds to the set of instances
that have at least a certain value for ${\max_t \rint_t}/{\min_t \rext_t}$. For
example, the bottom-left corner value signifies that $60$ out of the
$160$~instances with ${\max_t \rint_t}/{\min_t \rext_t} \ge 4.4$ could be solved
to optimality. As for Table~\ref{res:gaptabletype}, we see that the gaps
increase with a larger ${\max_t \rint_t}/{\min_t \rext_t}$ ratio. This can be
explained by our previous observation, namely, the
difficulty of enumerating all circular patterns for these instances.
When $\cpunknown \neq \emptyset$, then it could contain packable and
unpackable patterns. A valid dual bound is given by the solution to the \LP
relaxation without any unverified patterns fixed to zero. If this solution
contains patterns from $\cpunknown$ that are unpackable, then the computed bound
will be lower than the optimal \LP bound if all circular patterns were verified.
Alternatively, after the \LP relaxation has been solved to optimality, all
unverified patterns are fixed to zero. If $\cpunknown$ contains packable
patterns that are necessary to express an optimal integer solution, then the best
primal bound will be greater than the optimal solution value to the \rcpp.
To summarize, our results show that the number of ring types $\ntypes$ and the
quotient~${\max_t \rint_t}/{\min_t \rext_t}$ are in many cases reliable
indicators for both the quality of primal and dual bounds and the computational
cost of our method. Instances that require branching are often
too difficult to solve to global optimality.

\begin{table}[ht]
  \caption{Number of instances of the \randset test set according to final gap and ${\max_t \rint_t}/{\min_t \rext_t}$.}
  \label{res:gaptablerratio}
  \begin{tabular*}{\textwidth}{@{\extracolsep{\fill}}lc||rrrrrr}
    \toprule
    & total & $0\%$ & $0-25\%$ & $25-50\%$ & $50-100\%$ & $>100\%$ \\
    \midrule
    \midrule
    ${\max_t \rint_t}/{\min_t \rext_t} \ge 2.0$                &        800 &        287 &         45 &        370 &         97 &          1 \\
\hphantom{${\max_t \rint_t}/{\min_t \rext_t}$}\;$\ge 2.6$  &        640 &        223 &         42 &        295 &         79 &          1 \\
\hphantom{${\max_t \rint_t}/{\min_t \rext_t}$}\;$\ge 3.2$  &        480 &        170 &         36 &        204 &         69 &          1 \\
\hphantom{${\max_t \rint_t}/{\min_t \rext_t}$}\;$\ge 3.8$  &        320 &        122 &         25 &        126 &         46 &          1 \\
\hphantom{${\max_t \rint_t}/{\min_t \rext_t}$}\;$\ge 4.4$  &        160 &         60 &         11 &         66 &         23 &          0 \\
     \bottomrule
  \end{tabular*}
\end{table}

Finally, we briefly report on the size of the branch-and-bound trees for the
primal and dual bound experiments. We consider a node to be explored once it has
been selected by \scip's node selection algorithm. Our method explored more than
one node for $367$ of the $800$ \randset instances and only $15$ of these
instances could be solved to global optimality. The maximum number of explored
nodes is $207$.

\paragraph{Primal Bound Experiments.}

In our final experiments, we consider our method as a pure
primal heuristic for \rcpp and compare it to the \grasp heuristic by Pedroso et
al.~\cite{Pedroso2016}, which is specifically designed to find dense packings
quickly. Table~\ref{res:detailedreal} contains detailed results for the
\realset, and Table~\ref{res:primalbounds} aggregated results for the \randset test
set. For analytical purposes, Table~\ref{res:detailedreal} also shows the
results for the dual bounds.

\begin{table}[ht]
  \caption{Detailed results for the comparison with \grasp on the \realset test set.}
  \label{res:detailedreal}
  \centering
  \begin{tabular*}{30em}{l@{\extracolsep{\fill}}rrrrrr}
    \toprule
    \multicolumn{4}{c}{Instance} & \multicolumn{2}{c}{price \& verify} & \multicolumn{1}{c}{\grasp}\\
    \cmidrule[0.5pt]{1-4} \cmidrule[0.5pt]{5-6} \cmidrule[0.5pt]{7-7}
    Name\quad\quad & \ntypes & $\frac{\max_t \rint_t}{\min_t \rext_t}$ & volume & dual & primal & primal \\
    \midrule
    \midrule
    s03i1 & 3  & 2.2 & 1  & 1 & 1 & 1\\
s03i2 & 3  & 2.2 & 5  & 9 & 10 & 10\\
s03i3 & 3  & 2.2 & 47 & 82 & 98 & 95\\
s05i1 & 5  & 5.1 & 1  & 1 & 1 & 1\\
s05i2 & 5  & 5.1 & 4  & 8 & 10 & 10\\
s05i3 & 5  & 5.1 & 36 & 70 & 94 & 98\\
s16i1 & 16 & 6.3 & 1  & 1 & 1 & 1\\
s16i2 & 16 & 6.3 & 5  & 8 & 10 & 10\\
s16i3 & 16 & 6.3 & 45 & 73 & 96 & 96\\
     \bottomrule
  \end{tabular*}
\end{table}

For the \realset test set, we are able to solve the three
instances~\texttt{s**i1}, that require only one rectangle in the optimal
solution. On all but two instances our method achieves the same primal
bound as \grasp. On \texttt{s03i3} we need three more rectangles and
on \texttt{s05i3} four rectangles less.
Due to the small value ${\max_t \rint_t}/{\min_t \rext_t} = 2.2$ of the
instances with three different ring types, the enumeration step takes only a
fraction of a second. The difficulty of these instances lies only in the
placement of rings into rectangles and not in the recursive structure of
\rcpp. Hence, it is not surprising that our method computes a worse solution than
\grasp on \texttt{s03i3} because we only use a simple left-most greedy heuristic
before solving~\eqref{eq:pdwmodel:pricing}.

Proving optimality for the instances of the \realset test set is difficult for
our method.  Only the three instances~\texttt{s**i1} with one rectangle in the
optimal packing, could be solved to gap zero.
Because the rectangles are very large relative to the radii of the rings, the
pricing problems contain many variables and constraints. As a result, the
pricing problems are too difficult to solve to optimality. As shown above, the
ratio ${\max_t \rint_t}/{\min_t \rext_t}$ of the instances with $\ntypes > 3$ is
too large to enumerate and verify all circular patterns in reasonable time.

It is worth to notice that the dual bounds that can be proved by our method are
much better than simple volume-based bounds. Except for the three instances that
require only one rectangle, our dual bounds are at least $1.6$ times larger than
the volume-based bounds.

Even though our method is not particularly designed for the characteristics of
the instances in the \realset test set, it still performs well as a primal
heuristic. On the instances with five ring types it could enumerate many
non-trivial circular patterns and uses them to find a better primal solution
than \grasp.

For the \randset test set, Table~\ref{res:primalbounds} shows the number of instances on which
our method performed better or worse than \grasp with respect to the achieved
primal bound. The second column reports the primal bound average relative to the
value of \grasp on all instances of \randset. For this we compute the shifted
geometric mean of all ratios between the obtained primal bounds of our method
and \grasp. The third column shows the total number of instances on which our
method found a better primal solution. The fourth column shows the improvement
relative to \grasp. The remaining three columns show statistics for the instances on
which our method performed worse.

We observe that our method outperforms \grasp on many instances.  On average,
the solutions are $2.7\%$ better than the ones from \grasp. In total, we find a
better primal bound on $356$ of the instances, and a worse bound on only $33$
instances. Our method could find for all instances a primal feasible solution
before hitting the time or memory limit. The average deterioration on the $33$
instances is~$5.1-7.2\%$ and the average improvement on the $356$ instances is
$5.6-6.9\%$.
Our method finds more often a better solution than \grasp for instances that
have a larger number of ring types. For ten ring types we find $123$ better
solutions, which is about twice as large as for three ring types.

There are two reasons why our method performs better than \grasp on many
instances.  The first is that \grasp considers and positions each ring
individually. A large demand vector $\demand$ results in a large number of
rings, which makes it difficult for \grasp to find good primal solutions. In
contrast, the number of rings and circles that need to be considered in our
pricing problems and in the enumeration is typically bounded by a number derived
from some volume arguments instead of the entries of the demand vector
$\demand$. Thus, scaling up $\demand$ typically does not have a large impact
when applying our column generation algorithm as a primal heuristic. The second
reason for the better performance is that for a given set of circular and
rectangular patterns, the master problem takes the best decisions of how to pack
rings recursively into each other. For instances where this combinatorial part
of the problem is difficult, we expect that our method performs better than
\grasp. Indeed, on instances with a large number of ring types our method
frequently finds better solutions than \grasp.

\begin{table}[ht]
  \caption{Aggregated results for primal bound comparisons against \grasp on the \randset test
    set.}
  \label{res:primalbounds}
  \centering
  \begin{tabular*}{35em}{@{\extracolsep{\fill}}lrrrrr}
    \tabledefline{all}{all instances}
    \tabledefline{better}{instances with a better primal bound than \grasp}
    \tabledefline{worse}{instances with a worse primal bound than \grasp}
    \tabledefline{\#}{number of instances}
    \tabledefline{\%}{average primal bound relative to average primal bound of \grasp}
    \toprule
    & \multicolumn{1}{c}{all} & \multicolumn{2}{c}{better} & \multicolumn{2}{c}{worse}\\ \cmidrule[0.5pt]{2-2}\cmidrule[0.5pt]{3-4}\cmidrule[0.5pt]{5-6}
    & \% & \# & \% & \# & \% \\
    \midrule
    \midrule
    $\ntypes$ = 3  &       98.3 &         61 &       94.3 &          7 &      106.3\\
$\ntypes$ = 4  &       97.3 &         78 &       93.1 &          9 &      105.5\\
$\ntypes$ = 5  &       97.4 &         94 &       94.4 &          9 &      107.2\\
$\ntypes$ = 10 &       96.1 &        123 &       93.6 &          8 &      105.1\\ \midrule
all            &       97.3 &        356 &       93.8 &         33 &      106.0\\
     \bottomrule
  \end{tabular*}
\end{table}

In summary, the experiments on synthetic and real-world instances shows that our
method solves small and medium-sized instances to global optimality.  This was
the case on $287$ of the randomly generated instances and for three of nine
real-world instances.  Due to the costly verification of circular patterns and
difficult sub-problems, our method is not able to solve larger instances to
global optimality, but still achieves good primal and dual bounds.  Compared to
the state-of-the-art heuristic for \rcpp, our method finds on $356$ of the
instances better, and only on $33$\% of the instances worse primal solutions.
 
\section{Conclusion}
\label{section:conclusion}

In this article, we have presented the first exact algorithm for solving
practically relevant instances for the extremely challenging recursive circle packing problem. Our method is based
on a Dantzig-Wolfe decomposition of a nonconvex \MINLP model. The key idea of
solving this decomposition via column generation is to break symmetry of the
problem by using circular and rectangular patterns. These patterns are used to
model all possible combinations of packing rings into other rings. As a result,
we were able to reduce the complexity of the sub-problems significantly and
shift the recursive part of \rcpp to a linear master problem.

In some sense, this reformulation can be interpreted as a reduction technique from \rcpp to \cpp. All
occurring sub-problems in the enumeration of circular patterns and the pricing
problems are classical \cpps and they constitute the major computational
bottleneck. Every primal or dual improvement for this problem class would
directly translate to better primal and dual bounds for \rcpp. Still, in order
to prove optimality it is usually necessary to solve the $\mathcal{NP}$-hard
$\cpp$ to optimality, which can fail for harder instances. However,
even for this case, the application of Theorem~\ref{theorem:farley} and the
pessimistic and optimistic enumeration of circular patterns guarantees valid
primal and dual bounds for \rcpp. The combination of column generation with
column enumeration could be of more general interest when using
decomposition techniques for \MINLPs that lead to hard nonconvex sub-problems.

An interesting extension of the presented method is its application
to problem with different container shapes and sizes. Since the master problem
of the decomposed model does not depend on the shape of the containers or packed
objects, it is possible to apply this model to any container shape. This would
only require a modification to the column generation subproblems to produce
packable patterns that can be added to the master problem. Thus, the presented
methods cover a wide range of packing problems.

Finally, our current proof-of-concept implementation could certainly be improved
further. To mention only one point, we currently use a rather simple greedy
heuristic in order to find quickly a feasible solution for the verification
\NLP~\eqref{eq:verifynlp} and the Pricing
Problem~\eqref{eq:pdwmodel:pricing}. By using more sophisticated heuristics for
the well-studied \cpp, we might be able to compute better optimality gaps for
instances where the positioning of circles into rectangles or rings is
difficult.
Surprisingly, even with a simple greedy heuristic our method works well as a
primal heuristic and finds for many instances better solutions than
\grasp~\cite{Pedroso2016}.

\section*{Acknowledgments}
This work has been supported by the Research Campus MODAL \emph{Mathematical
  Optimization and Data Analysis Laboratories} funded by the Federal Ministry of
Education and Research (BMBF Grant~05M14ZAM).  All responsibility for the content
of this publication is assumed by the authors.
 
\bibliographystyle{spmpsci}      
\bibliography{ring}

\newpage
\section*{Appendix}
\label{sec:appendix}


 
\end{document}